\newcommand{\HRule}{\rule{\linewidth}{1pt}}
\newtheorem{thm}{Theorem}[section]
\newtheorem{note}[thm]{Note}
\theoremstyle{proposition}
\newtheorem{cor}[thm]{Corollary}
\theoremstyle{definition}
\newtheorem{defn}{Definition}[section]
\theoremstyle{remark}
\newtheorem{rem}{Remark}[section]
\numberwithin{equation}{section}
\begin{document}
\hyphenpenalty=100000

\begin{flushright}

{\Large \textbf{\\ The general subclasses of the analytic functions and their various properties }}\\[5mm]
{\large \textbf{Sercan Topkaya$^\mathrm{*1}$\footnote{\emph{*Corresponding author: E-mail: topkaya.sercan@hotmail.com}}  and Nizami Mustafa$^\mathrm{1}$}}\\[1mm]
$^\mathrm{1}${\footnotesize \it Department of Mathematics, Faculty of Science and Letters,\\ Kafkas University, Campus, 36100, Kars-Turkey\\ \textit{topkaya.sercan@hotmail.com} and \textit{nizamimustafa@gmail.com}}\\ 
\end{flushright}

\begin{flushleft}\fbox{%
\begin{minipage}{1.3in}
{\slshape \textbf{Original Research Article}\/}
\end{minipage}}
\end{flushleft}

\begin{flushright}\footnotesize \it Received: XX December 20XX\\ 
Accepted: XX December 20XX\\
Online Ready: XX December 20XX
\end{flushright}
\HRule\\[3mm]

{\Large \textbf{Abstract}}\\[4mm]
\fbox{%
\begin{minipage}{5.4in}{\footnotesize The object of the present paper is to introduce and investigate two new general subclasses 
${{S}^{*}}C(\alpha ,\beta ;\gamma )$ and $T{{S}^{*}}C(\alpha ,\beta ;\gamma )~~(\alpha, \beta \in [0,1),~\gamma \in [0,1])$ of the analytic functions. Here, we give sufficient conditions as well as necessary and sufficient conditions for the functions belonging to the classes.} \end{minipage}}\\[1mm]
\footnotesize{\it{Keywords:} Analytic function; starlike function; convex function; Gamma function}\\[1mm] 
\footnotesize{{2010 Mathematics Subject Classification:} 30C45; 30C55; 33B15; 33D05; 33E50}

\afterpage{
\fancyhead{} \fancyfoot{} 
\fancyfoot[R]{\footnotesize\thepage}
\fancyhead[R]{\scriptsize\it Asian Research Journal of Mathematics 
{{X(X), XX--XX}},~20XX \\
 }}

\section{\large Introduction and preliminaries}
Let $A$ be the class of analytic functions$f(z)$ in the open unit disk $U=\left\{ z\in \mathbb{C}:\text{ }\left| z \right|<1 \right\}$ of the form
\begin{equation}\label{eq11}
f(z)=z+{{a}_{2}}{{z}^{2}}+{{a}_{3}}{{z}^{3}}+\cdot \cdot \cdot +{{a}_{n}}{{z}^{n}}+\cdot \cdot \cdot =z+\sum\limits_{n=2}^{\infty}{{{a}_{n}}{{z}^{n}}},\text{ }{{a}_{n}}\in \mathbb{C}.
\end{equation}
Also, by $S$we will denote the family of all functions in $A$ which are univalent in $U$. 
Let $T$ denote the subclass of all functions $f(z)$ in $A$ of the form
\begin{equation}\label{eq12}
f(z)=z-{{a}_{2}}{{z}^{2}}-{{a}_{3}}{{z}^{3}}-\cdot \cdot \cdot -{{a}_{n}}{{z}^{n}}-\cdot \cdot \cdot =z-\sum\limits_{n=2}^{\infty }{{{a}_{n}}{{z}^{n}}},\text{ }{{a}_{n}}\ge 0.
\end{equation}
Some of the important and well-investigated subclasses of the univalent functions class $S$ include the classes ${{S}^{*}}(\alpha )\text{ and }C(\alpha )$, respectively, starlike and convex of order $\alpha $ $\left( \alpha \in \left[ 0,1 \right) \right)$ in the open unit disk $U$. \\
By definition, we have (see for details, \cite{Duren,Goodman}, also \cite{Srivastava})    
$${{S}^{*}}(\alpha )=\left\{ f\in A:\text{ Re}\left( \frac{z{f}'(z)}{f(z)} \right)>\alpha ,\text{ }z\in U\text{ } \right\},\text{ }\alpha \in \left[ 0,1 \right),$$
and 
$$C(\alpha )=\left\{ f\in A:\text{ Re}\left( 1+\frac{z{f}''(z)}{{f}'(z)} \right)>\alpha ,\text{ }z\in U\text{ } \right\},\text{ }\alpha \in \left[ 0,1 \right) .$$
Note that, we will use $T{{S}^{*}}(\alpha )={{S}^{*}}(\alpha )\cap T\text{ and }TC(\alpha )=C(\alpha )\cap T.$ \\
Interesting generalization of the functions classes ${{S}^{*}}(\alpha )\text{ and }C(\alpha )$, are classes ${{S}^{*}}(\alpha ,\beta )$ and $C(\alpha ,\beta )$, which defined by
$${{S}^{*}}(\alpha ,\beta )=\left\{ f\in A:\text{ Re}\left( \frac{z{f}'(z)}{\beta z{f}'(z)+(1-\beta )f(z)} \right)>\alpha ,\text{ }z\in U \right\},\text{ }\alpha ,\beta \in \left[ 0,1 \right)$$
and
$$C(\alpha ,\beta )=\left\{ f\in A:\text{ Re}\left( \frac{{f}'(z)+z{f}''(z)}{{f}'(z)+\beta z{f}''(z)} \right)>\alpha ,\text{ }z\in U \right\},\alpha ,\beta \in \left[ 0,1 \right),$$
respectively. \\
We will denote $T{{S}^{*}}(\alpha ,\beta )={{S}^{*}}(\alpha ,\beta )\cap T$ and $TC(\alpha ,\beta )=C(\alpha ,\beta )\cap T$.\\
These classes $T{{S}^{*}}(\alpha ,\beta )$ and $TC(\alpha ,\beta )$ were extensively studied by Altinta\c s and Owa \cite{Alt?ntas88}, Porwal \cite{Porwal14}, and certain conditions for hypergeometric functions and generalized Bessel functions for these classes were studied Moustafa \cite{Moustafa} and Porwal and Dixit \cite{Porwal13}. \\
The coefficient problems for the subclasses $TS^*(\alpha, \beta) $ and $TC(\alpha, \beta) $ were investigated by Alt\i nta\c s and Owa in \cite{Alt?ntas88}. They, also investigated properties like starlike and convexity of these classes.\\
Also, the coefficient problems, representation formula and distortion theorems for these subclasses $S^*(\alpha, \beta, \mu) $ and $C^*(\alpha, \beta, \mu) $ of the analytic functions were given by Owa and Aouf in \cite{Owa2}. \\
In \cite{Ekrem}, results of Silverman were extended by Kadio\u glu.            \\
Inspired by the studies mentioned above, we define a unification of the functions classes ${{S}^{*}}(\alpha ,\beta )$ and $C(\alpha ,\beta )$ as follows.
\begin{defn}\label{tnm11}
A function $f\in A$ given by (\ref{eq11}) is said to be in the class ${{S}^{*}}C(\alpha ,\beta ;\gamma )$, $\alpha ,\beta \in \left[ 0,1 \right),~\gamma \in \left[ 0,1 \right]$ if the following condition is satisfied
$$\operatorname{Re}\left\{ \frac{z{f}'(z)+\gamma {{z}^{2}}{f}''(z)}{\gamma z\left( {f}'(z)+\beta z{f}''(z) \right)+(1-\gamma )\left( \beta z{f}'(z)+(1-\beta )f(z) \right)} \right\}>\alpha ,z\in U.$$
Also, we will denote $T{{S}^{*}}C(\alpha ,\beta ;\gamma )={{S}^{*}}C(\alpha ,\beta ;\gamma )\cap T.$
\end{defn}
In special case, we have: \\ 
${{S}^{*}}C(\alpha ,\beta ;0)={{S}^{*}}(\alpha ,\beta );\text{ }{{S}^{*}}C(\alpha ,\beta ;1)=C(\alpha ,\beta );\text{ }{{S}^{*}}C(\alpha ,0;0)={{S}^{*}}(\alpha );$\\ 
${{S}^{*}}C(\alpha ,0;1)=C(\alpha );~T{{S}^{*}}C(\alpha ,\beta ;0)=T{{S}^{*}}(\alpha ,\beta );\text{ }T{{S}^{*}}C(\alpha ,\beta ;1)=TC(\alpha ,\beta );$\\
$T{{S}^{*}}C(\alpha ,0;0)=T{{S}^{*}}(\alpha ); 
T{{S}^{*}}C(\alpha ,0;1)=TC(\alpha ).$ \\
Suitably specializing the parameters we note that
\begin{itemize}
\item[1)] ${{S}^{*}}C(\alpha ,0;0)=S^*(\alpha )$ \cite{Silverman}
\item[2)] ${{S}^{*}}C(\alpha ,0;1)=C(\alpha )$ \cite{Silverman}
\item[3)]$T{{S}^{*}}C(\alpha ,\beta ;0)=T{{S}^{*}}(\alpha ,\beta )$ \cite{Alt?ntas91,Alt?ntas95,Alt?ntas4} and \cite{Porwal14}
\item[4)] $T{{S}^{*}}C(\alpha ,0;0)=TS^*(\alpha )$ \cite{Silverman}
\item[5)]$T{{S}^{*}}C(\alpha ,\beta ;1)=TC(\alpha ,\beta )$ \cite{Alt?ntas88} and \cite{Porwal14}
\item[6)] $T{{S}^{*}}C(\alpha ,0;1)=TC(\alpha )$ \cite{Silverman}
\end{itemize}

In this paper, we introduce and investigate two new subclasses ${{S}^{*}}C(\alpha ,\beta ;\gamma )$ and $T{{S}^{*}}C(\alpha ,\beta ;\gamma )$, $\alpha ,\beta \in \left[ 0,1 \right)$, $\gamma \in \left[ 0,1 \right]$of the analytic functions in the open unit disk. The object of the present paper is to derive characteristic properties of the functions belonging to these subclasses. Also, we examine some analytic functions which involve Gamma function, and provide conditions for these functions to be in these subclasses.

\section{\large Conditions for the subclasses ${{S}^{*}}C(\alpha ,\beta ;\gamma )$ and $T{{S}^{*}}C(\alpha ,\beta ;\gamma )$}
In this section, we will examine some characteristic properties of the subclasses ${{S}^{*}}C(\alpha ,\beta ;\gamma )$ and $T{{S}^{*}}C(\alpha ,\beta ;\gamma )$ of analytic functions in the open unit disk. \\
A sufficient condition for the functions in the subclass ${{S}^{*}}C(\alpha ,\beta ;\gamma )$ is given by the following theorem. 
\begin{thm}\label{thrm21}
Let $f\in A$. Then, the function $f(z)$ belongs to the class ${{S}^{*}}C(\alpha ,\beta ;\gamma )$, $\alpha ,\beta \in \left[ 0,1 \right),\gamma \in \left[ 0,1 \right]$ if the following condition is satisfied  \begin{equation}\label{eq21}
\sum\limits_{n=2}^{\infty }{\left( 1+(n-1)\gamma  \right)\left( n-\alpha -(n-1)\alpha \beta  \right)}\left| {{a}_{n}} \right|\le 1-\alpha .
\end{equation}
The result is sharp for the functions
\begin{equation}\label{eq22}
{{f}_{n}}(z)=z+\frac{1-\alpha }{\left( 1+(n-1)\gamma  \right)\left( n-\alpha -(n-1)\alpha \beta  \right)}{{z}^{n}},\text{ }z\in U,\text{ }n=2,3,...\text{ }.
\end{equation}
\end{thm}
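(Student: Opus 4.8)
The plan is to use the classical Silverman-type argument. Write $P(z) = z f'(z) + \gamma z^2 f''(z)$ for the numerator and $Q(z)$ for the denominator appearing in Definition \ref{tnm11}. Since $|w-1| \le 1-\alpha$ implies $\operatorname{Re}(w) \ge 1 - |w-1| \ge \alpha$, it suffices to establish the pointwise estimate $|P(z) - Q(z)| \le (1-\alpha)|Q(z)|$ on $U$, and then to observe that for $|z|<1$ this inequality is actually strict, which upgrades $\operatorname{Re}(P/Q) \ge \alpha$ to the required $\operatorname{Re}(P/Q) > \alpha$.

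First I would expand $P$ and $Q$ as power series using (\ref{eq11}). A direct computation gives
$$P(z) = z + \sum_{n=2}^{\infty} n\bigl(1+(n-1)\gamma\bigr) a_n z^n, \qquad Q(z) = z + \sum_{n=2}^{\infty} \bigl(1+(n-1)\gamma\bigr)\bigl(1+(n-1)\beta\bigr) a_n z^n,$$
where the factorization of the coefficient of $Q$ rests on the identity $\gamma n + (1-\gamma) = 1 + (n-1)\gamma$. Subtracting, the linear terms cancel and
$$P(z) - Q(z) = \sum_{n=2}^{\infty} \bigl(1+(n-1)\gamma\bigr)(n-1)(1-\beta)\, a_n z^n.$$

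Next, for $|z| = r < 1$ I would bound $|P(z)-Q(z)|$ from above and $|Q(z)|$ from below by the triangle inequality, reducing the desired estimate $|P-Q| \le (1-\alpha)|Q|$ to
$$\sum_{n=2}^{\infty}\bigl(1+(n-1)\gamma\bigr)\Bigl[(n-1)(1-\beta) + (1-\alpha)\bigl(1+(n-1)\beta\bigr)\Bigr]|a_n|\, r^{n-1} \le 1-\alpha.$$
The crux is the algebraic identity $(n-1)(1-\beta) + (1-\alpha)\bigl(1+(n-1)\beta\bigr) = n - \alpha - (n-1)\alpha\beta$, which collapses the bracket into exactly the coefficient appearing in (\ref{eq21}). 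Since $r^{n-1} < 1$, the left-hand side is dominated by $\sum \bigl(1+(n-1)\gamma\bigr)\bigl(n-\alpha-(n-1)\alpha\beta\bigr)|a_n| \le 1-\alpha$, which is the hypothesis (\ref{eq21}). The same inequality, with $r<1$, keeps the lower bound for $|Q|$ positive (so $Q$ has no zeros in $U$ apart from the simple one at the origin, where $P/Q \to 1$) and forces the strict inequality $\operatorname{Re}(P/Q) > \alpha$; the degenerate case $f(z)=z$ is immediate.

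Finally, for sharpness I would substitute the single-term functions $f_n$ of (\ref{eq22}): for these the only nonzero coefficient $a_n = (1-\alpha)/\bigl[(1+(n-1)\gamma)(n-\alpha-(n-1)\alpha\beta)\bigr]$ turns (\ref{eq21}) into an equality, and letting $z^{n-1}\to 1$ along the boundary drives $\operatorname{Re}(P/Q)$ down to $\alpha$, so the constant $1-\alpha$ cannot be relaxed. I expect the only delicate points to be the bookkeeping in the two power-series expansions and the verification of the bracket identity; once those are in hand, the estimate and the sharpness are routine.
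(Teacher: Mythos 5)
Your proof follows the paper's argument essentially verbatim: the same reduction of $\operatorname{Re}(P/Q)>\alpha$ to the estimate $\left| P/Q-1 \right|\le 1-\alpha$, the same power-series expansions of numerator and denominator (resting on the identity $\gamma n+(1-\gamma)=1+(n-1)\gamma$), and the same collapse of $(n-1)(1-\beta)+(1-\alpha)\left( 1+(n-1)\beta \right)$ into $n-\alpha-(n-1)\alpha\beta$; your extra care about strictness for $|z|<1$, the nonvanishing of $Q$, and the case $f(z)=z$ is a mild refinement of points the paper glosses over. One slip in your sharpness paragraph: for $f_n$ the extremal boundary direction is $z^{n-1}\to -1$, not $z^{n-1}\to 1$ — writing $A=(1+(n-1)\gamma)a_n=\frac{1-\alpha}{n-\alpha-(n-1)\alpha\beta}$, one finds at $z^{n-1}=-1$ that $P/Q=\frac{1-nA}{1-(1+(n-1)\beta)A}=\frac{(n-1)\alpha(1-\beta)}{(n-1)(1-\beta)}=\alpha$, whereas at $z^{n-1}\to 1$ the ratio tends to a value exceeding $1$ (e.g.\ $\frac{2n}{n+1+(n-1)\beta}$ when $\alpha=0$). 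Since the paper's own sharpness claim is only that $f_n$ attains equality in (\ref{eq21}) — which you verify correctly — this does not undermine your proof, but the boundary direction should be corrected.
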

\begin{proof}
From the Definition \ref{tnm11}, a function$f\in {{S}^{*}}C(\alpha ,\beta ;\gamma )$, $\alpha ,\beta \in \left[ 0,1 \right),\gamma \in \left[ 0,1 \right]$ if and only if 
\begin{equation}\label{eq23}
\operatorname{Re}\left\{ \frac{z{f}'(z)+\gamma {{z}^{2}}{f}''(z)}{\gamma z\left( {f}'(z)+\beta z{f}''(z) \right)+(1-\gamma )\left( \beta z{f}'(z)+(1-\beta )f(z) \right)} \right\}>\alpha .
\end{equation}
We can easily show that condition (\ref{eq23}) holds true if
$$\left| \frac{z{f}'(z)+\gamma {{z}^{2}}{f}''(z)}{\gamma z\left( {f}'(z)+\beta z{f}''(z) \right)+(1-\gamma )\left( \beta z{f}'(z)+(1-\beta )f(z) \right)}-1 \right|\le 1-\alpha .$$
Now, let us show that this condition is satisfied under the hypothesis (\ref{eq21}) of the theorem.
By simple computation, we write
\begin{align*}
  & \left| \frac{z{f}'(z)+\gamma {{z}^{2}}{f}''(z)}{\gamma z\left( {f}'(z)+\beta z{f}''(z) \right)+(1-\gamma )\left( \beta z{f}'(z)+(1-\beta )f(z) \right)}-1 \right| \\ 
 & =\left| \frac{\sum\limits_{n=2}^{\infty }{\left( 1+(n-1)\gamma  \right)(n-1)(1-\beta ){{a}_{n}}{{z}^{n}}}}{z+\sum\limits_{n=2}^{\infty }{\left( 1+(n-1)\gamma  \right)\left( 1+(n-1)\beta  \right){{a}_{n}}{{z}^{n}}}} \right|\le \frac{\sum\limits_{n=2}^{\infty }{\left( 1+(n-1)\gamma  \right)(n-1)(1-\beta )\left| {{a}_{n}} \right|}}{1-\sum\limits_{n=2}^{\infty }{\left( 1+(n-1)\gamma  \right)\left( 1+(n-1)\beta  \right)\left| {{a}_{n}} \right|}}.  
\end{align*}
Last expression of the above inequality is bounded by $1-\alpha $ if
$$\sum\limits_{n=2}^{\infty }{\left( 1+(n-1)\gamma  \right)(n-1)(1-\beta )\left| {{a}_{n}} \right|}\le (1-\alpha )\left\{ 1-\sum\limits_{n=2}^{\infty }{\left( 1+(n-1)\gamma  \right)\left( 1+(n-1)\beta  \right)\left| {{a}_{n}} \right|} \right\},$$
which is equivalent to
$$\sum\limits_{n=2}^{\infty }{\left( 1+(n-1)\gamma  \right)\left( n-\alpha -(n-1)\alpha \beta  \right)}\left| {{a}_{n}} \right|\le 1-\alpha .$$
Also, we can easily see that the equality in (\ref{eq21}) is satisfied by the functions given by (\ref{eq22}). \\
Thus, the proof of Theorem \ref{thrm21} is completed.
\end{proof} 
Setting $\gamma =0$ and $\gamma =1$ in Theorem \ref{thrm21}, we can readily deduce the following results, respectively.

\begin{cor}\label{snc21}
The function $f(z)$ definition by (\ref{eq11}) belongs to the class ${{S}^{*}}(\alpha ,\beta )$, $\alpha ,\beta \in \left[ 0,1 \right)$ if the following condition is satisfied  
$$\sum\limits_{n=2}^{\infty }{\left( n-\alpha -(n-1)\alpha \beta  \right)}\left| {{a}_{n}} \right|\le 1-\alpha. $$
The result is sharp for the functions
$${{f}_{n}}(z)=z+\frac{1-\alpha }{n-\alpha -(n-1)\alpha \beta }{{z}^{n}},\text{ }z\in U,\text{ }n=2,3,...\text{ }.$$ 
\end{cor}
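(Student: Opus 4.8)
The plan is to obtain this corollary as the $\gamma=0$ specialization of Theorem \ref{thrm21}, so no independent argument is needed. First I would invoke the class identity ${{S}^{*}}C(\alpha ,\beta ;0)={{S}^{*}}(\alpha ,\beta )$ recorded immediately after Definition \ref{tnm11}; this reduces the statement to substituting $\gamma=0$ into the conclusion of Theorem \ref{thrm21}.

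Next I would carry out the substitution itself. Setting $\gamma=0$ makes the factor $1+(n-1)\gamma$ equal to $1$ for every $n\ge 2$, so the left-hand side of condition (\ref{eq21}) collapses to $\sum_{n=2}^{\infty}\left(n-\alpha-(n-1)\alpha\beta\right)\left|a_n\right|$, while the right-hand side $1-\alpha$ is unaffected. This is precisely the hypothesis stated in the corollary, and since Theorem \ref{thrm21} guarantees membership in ${{S}^{*}}C(\alpha,\beta;0)$, the identification above yields $f\in {{S}^{*}}(\alpha,\beta)$.

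For sharpness I would specialize the extremal functions (\ref{eq22}) in the same way: at $\gamma=0$ the denominator $\left(1+(n-1)\gamma\right)\left(n-\alpha-(n-1)\alpha\beta\right)$ becomes $n-\alpha-(n-1)\alpha\beta$, so $f_n(z)$ reduces to the stated extremal function and attains equality in the reduced coefficient inequality. I expect no genuine obstacle here, since the entire content is the bookkeeping of the substitution; the only point worth verifying is that $n-\alpha-(n-1)\alpha\beta>0$ for $\alpha,\beta\in[0,1)$ and $n\ge 2$ (indeed, at $n=2$ this is $2-\alpha(1+\beta)>0$, and the quantity only increases with $n$), which ensures that the extremal functions are well defined.
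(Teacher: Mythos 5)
Your proposal is correct and matches the paper exactly: the paper derives Corollary \ref{snc21} precisely by setting $\gamma=0$ in Theorem \ref{thrm21} (using the identification ${S}^{*}C(\alpha,\beta;0)={S}^{*}(\alpha,\beta)$), with the extremal functions (\ref{eq22}) reducing to the stated ones. Your additional check that $n-\alpha-(n-1)\alpha\beta>0$ is a harmless extra verification not spelled out in the paper.
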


\begin{cor}\label{snc22}
The function $f(z)$ definition by (\ref{eq11}) belongs to the class $C(\alpha ,\beta )$, $\alpha ,\beta \in \left[ 0,1 \right)$ if the following condition is satisfied   
$$\sum\limits_{n=2}^{\infty }{n\left( n-\alpha -(n-1)\alpha \beta  \right)}\left| {{a}_{n}} \right|\le 1-\alpha .$$ 
The result is sharp for the functions 
$${{f}_{n}}(z)=z+\frac{1-\alpha }{n\left( n-\alpha -(n-1)\alpha \beta  \right)}{{z}^{n}},\text{ }z\in U,\text{ }n=2,3,...\text{ }.$$
\end{cor}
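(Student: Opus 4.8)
The plan is to obtain this corollary as a direct specialization of Theorem \ref{thrm21}, exactly as the connecting sentence before the two corollaries suggests. Since the list of special cases in Section 1 records the identity ${{S}^{*}}C(\alpha ,\beta ;1)=C(\alpha ,\beta )$, it suffices to set $\gamma =1$ in the sufficient condition (\ref{eq21}) already established for the general class ${{S}^{*}}C(\alpha ,\beta ;\gamma )$, and then read off what that condition becomes.

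Concretely, I would substitute $\gamma =1$ into the factor $\left( 1+(n-1)\gamma \right)$ appearing on the left-hand side of (\ref{eq21}). This factor collapses to $1+(n-1)=n$, so the summand $\left( 1+(n-1)\gamma \right)\left( n-\alpha -(n-1)\alpha \beta \right)\left| {{a}_{n}} \right|$ becomes $n\left( n-\alpha -(n-1)\alpha \beta \right)\left| {{a}_{n}} \right|$, while the bound $1-\alpha$ on the right-hand side is unaffected. The inequality that results is precisely the hypothesis of the corollary, and by Theorem \ref{thrm21} any $f$ meeting it lies in ${{S}^{*}}C(\alpha ,\beta ;1)$, which is $C(\alpha ,\beta )$. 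This establishes the membership assertion.

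For the sharpness statement, I would apply the same substitution $\gamma =1$ to the extremal functions (\ref{eq22}). The denominator $\left( 1+(n-1)\gamma \right)\left( n-\alpha -(n-1)\alpha \beta \right)$ reduces in the identical way to $n\left( n-\alpha -(n-1)\alpha \beta \right)$, so (\ref{eq22}) becomes exactly the functions ${{f}_{n}}$ displayed in the corollary. Because Theorem \ref{thrm21} already guarantees that equality in (\ref{eq21}) is attained by these extremal functions, sharpness transfers immediately upon specialization and needs no separate verification.

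There is essentially no genuine obstacle here: the proof is a one-parameter specialization of an already-proved theorem. The only point requiring any care is ensuring the algebraic simplification $1+(n-1)=n$ is carried out consistently in both the coefficient inequality and the extremal functions, so that the stated sharp functions match the specialized form of (\ref{eq22}).
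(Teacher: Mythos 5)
Your proposal is correct and follows exactly the paper's own route: the paper obtains Corollary \ref{snc22} precisely by setting $\gamma =1$ in Theorem \ref{thrm21}, so that $1+(n-1)\gamma$ collapses to $n$ in both the coefficient condition (\ref{eq21}) and the extremal functions (\ref{eq22}), with sharpness inherited from the theorem. No gaps; nothing further is needed.
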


\begin{cor}\label{snc23}
The function $f(z)$ definition by (\ref{eq11}) belongs to the class ${{S}^{*}}(\alpha )$, $\alpha \in \left[ 0,1 \right)$ if the following condition is satisfied     
$$\sum\limits_{n=2}^{\infty }{\left( n-\alpha  \right)}\left| {{a}_{n}} \right|\le 1-\alpha .$$ 
The result is sharp for the functions 
$${{f}_{n}}(z)=z+\frac{1-\alpha }{n-\alpha }{{z}^{n}},\text{ }z\in U,\text{ }n=2,3,...\text{ }.$$
\end{cor}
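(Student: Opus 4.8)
The plan is to derive this corollary as a direct specialization of Theorem \ref{thrm21}, exactly as Corollaries \ref{snc21} and \ref{snc22} were obtained. The key observation is the identification ${{S}^{*}}(\alpha )={{S}^{*}}C(\alpha ,0;0)$, which one confirms by setting $\beta =0$ and $\gamma =0$ in Definition \ref{tnm11}: every surviving term of the denominator that is not $(1-\gamma )(1-\beta )f(z)$ carries a factor of $\gamma$ or $\beta$ and hence vanishes, so the denominator collapses to $f(z)$, while the numerator collapses to $z{f}'(z)$. This recovers precisely the classical starlikeness condition $\operatorname{Re}\left( z{f}'(z)/f(z) \right)>\alpha$ defining ${{S}^{*}}(\alpha )$.

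Granting this identification, I would substitute $\beta =0$ and $\gamma =0$ into the sufficient condition (\ref{eq21}) of Theorem \ref{thrm21}. The factor $1+(n-1)\gamma$ becomes $1$, and the factor $n-\alpha -(n-1)\alpha \beta$ becomes $n-\alpha$, so the left-hand side reduces to $\sum_{n=2}^{\infty}\left( n-\alpha \right)\left| {{a}_{n}} \right|$ while the right-hand side $1-\alpha$ is unchanged. This yields exactly the asserted inequality. Equivalently, one may instead begin from Corollary \ref{snc21} and set $\beta =0$ there, collapsing $n-\alpha -(n-1)\alpha \beta$ to $n-\alpha$ in a single step.

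For the sharpness claim I would carry out the same substitution in the extremal functions (\ref{eq22}). With $\gamma =0$ and $\beta =0$, the coefficient $\frac{1-\alpha }{\left( 1+(n-1)\gamma  \right)\left( n-\alpha -(n-1)\alpha \beta  \right)}$ simplifies to $\frac{1-\alpha }{n-\alpha }$, producing ${{f}_{n}}(z)=z+\frac{1-\alpha }{n-\alpha }{{z}^{n}}$. These functions attain equality in the reduced inequality, which establishes that the bound cannot be improved.

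Since each step is a routine parameter substitution inherited from an already-proved theorem, there is no genuine obstacle in this argument. The only point deserving a moment's care is verifying that the double specialization $\beta =0,\ \gamma =0$ in Definition \ref{tnm11} really returns the classical class ${{S}^{*}}(\alpha )$ rather than a degenerate quotient; once the denominator is checked to reduce cleanly to $f(z)$, everything else follows immediately.
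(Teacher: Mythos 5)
Your proposal is correct and takes essentially the same route as the paper, which obtains Corollary \ref{snc23} precisely by the parameter specialization $\gamma =0$, $\beta =0$ in Theorem \ref{thrm21} (equivalently $\beta =0$ in Corollary \ref{snc21}), with the extremal functions (\ref{eq22}) reducing to ${{f}_{n}}(z)=z+\frac{1-\alpha }{n-\alpha }{{z}^{n}}$ in the same way. Your verification that Definition \ref{tnm11} collapses cleanly to the classical class ${{S}^{*}}(\alpha )$ when $\beta =\gamma =0$ simply confirms the identification ${{S}^{*}}C(\alpha ,0;0)={{S}^{*}}(\alpha )$ already recorded in the paper.
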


\begin{cor}\label{snc24}
The function $f(z)$ definition by (\ref{eq11}) belongs to the class $C(\alpha )$, $\alpha \in \left[ 0,1 \right)$ if the following condition is satisfied      
$$\sum\limits_{n=2}^{\infty }{n\left( n-\alpha  \right)}\left| {{a}_{n}} \right|\le 1-\alpha. $$ 
The result is sharp for the functions 
$${{f}_{n}}(z)=z+\frac{1-\alpha }{n\left( n-\alpha  \right)}{{z}^{n}},\text{ }z\in U,\text{ }n=2,3,...\text{ }.$$
\end{cor}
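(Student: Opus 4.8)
The plan is to obtain this corollary as a direct specialization of Theorem~\ref{thrm21}, since the special-case list following Definition~\ref{tnm11} records $C(\alpha) = S^*C(\alpha,0;1)$. Hence it suffices to substitute $\gamma = 1$ and $\beta = 0$ into the sufficient condition (\ref{eq21}) and simplify the resulting coefficient weight; no independent argument about $C(\alpha)$ is needed, because membership in $C(\alpha)$ is, for these parameter values, literally membership in the class treated by Theorem~\ref{thrm21}.

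First I would set $\gamma = 1$, which turns the factor $1 + (n-1)\gamma$ into $1 + (n-1) = n$. Next, setting $\beta = 0$ collapses the factor $n - \alpha - (n-1)\alpha\beta$ to $n - \alpha$, since the $(n-1)\alpha\beta$ term vanishes. Multiplying the two simplified factors, the general summand $\left(1+(n-1)\gamma\right)\left(n-\alpha-(n-1)\alpha\beta\right)|a_n|$ becomes exactly $n(n-\alpha)|a_n|$, so (\ref{eq21}) reduces precisely to the asserted inequality $\sum_{n=2}^{\infty} n(n-\alpha)|a_n| \le 1-\alpha$. Equivalently, one may first invoke Corollary~\ref{snc22} (the $\gamma = 1$ case, giving the weight $n(n-\alpha-(n-1)\alpha\beta)$) and then put $\beta = 0$; both routes deliver the same bound.

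For sharpness I would apply the identical substitution to the extremal functions (\ref{eq22}): with $\gamma = 1$ and $\beta = 0$ the denominator $\left(1+(n-1)\gamma\right)\left(n-\alpha-(n-1)\alpha\beta\right)$ simplifies to $n(n-\alpha)$, which yields $f_n(z) = z + \frac{1-\alpha}{n(n-\alpha)}z^n$. Because these functions already attain equality in (\ref{eq21}) by Theorem~\ref{thrm21}, they attain equality in the specialized inequality as well, so the bound $1-\alpha$ cannot be improved.

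There is essentially no genuine obstacle here: the whole content is the routine algebraic simplification of the coefficient weight under the substitution $\gamma = 1$, $\beta = 0$, together with the identification $C(\alpha) = S^*C(\alpha,0;1)$ that licenses applying Theorem~\ref{thrm21} in the first place. The only point requiring minor care is checking that $\beta = 0$ fully annihilates the $(n-1)\alpha\beta$ term while $\gamma = 1$ correctly produces the factor $n$, so that the two factors multiply cleanly with no surviving cross-terms.
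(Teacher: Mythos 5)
Your proposal is correct and is exactly the paper's route: the paper deduces this corollary from Theorem~\ref{thrm21} by the same specialization $\gamma=1$, $\beta=0$ (equivalently via Corollary~\ref{snc22} with $\beta=0$), using the identification ${S}^{*}C(\alpha,0;1)=C(\alpha)$, with sharpness inherited from the extremal functions (\ref{eq22}). No further comment is needed.
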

\begin{rem}\label{yrm21}
Further consequences of the properties given by Corollary \ref{snc23} and Corollary \ref{snc24} can be obtained for each of the classes studied by earlier researches, by specializing the various parameters involved. Many of these consequences were proved by earlier researches on the subject (cf., e.g., \cite{Silverman}).
\end{rem}
For the function in the class $T{{S}^{*}}C(\alpha ,\beta ;\gamma )$, the converse of Theorem \ref{thrm21} is also true.
\begin{thm}\label{thrm22}
Let $f\in T$. Then, the function $f(z)$ belongs to the class $T{{S}^{*}}C(\alpha ,\beta ;\gamma )$, $\alpha ,\beta \in \left[ 0,1 \right),\text{ }\gamma \in \left[ 0,1 \right]$ if and only if   
\begin{equation}\label{eq24}
\sum\limits_{n=2}^{\infty }{\left( 1+(n-1)\gamma  \right)\left( n-\alpha -(n-1)\alpha \beta  \right)}{{a}_{n}}\le 1-\alpha .
\end{equation}
The result is sharp for the functions
\begin{equation}\label{eq25}
{{f}_{n}}(z)=z-\frac{1-\alpha }{\left( 1+(n-1)\gamma  \right)\left( n-\alpha -(n-1)\alpha \beta  \right)}{{z}^{n}},\text{ }z\in U,\text{ }n=2,3,...\text{ }.
\end{equation}
\end{thm}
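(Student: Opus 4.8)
The plan is to observe that the forward (\emph{if}) direction is already contained in Theorem \ref{thrm21}: since $f\in T$ has $a_n\ge 0$, one has $|a_n|=a_n$, so condition (\ref{eq24}) is identical to (\ref{eq21}), and Theorem \ref{thrm21} immediately yields $f\in {{S}^{*}}C(\alpha,\beta;\gamma)$, hence $f\in T{{S}^{*}}C(\alpha,\beta;\gamma)$. The real content is therefore the converse (\emph{only if}), and for this I would use the standard device of restricting the variable $z$ to the real segment $(0,1)$.

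First I would record the series expansions already computed in the proof of Theorem \ref{thrm21}: for $f(z)=z-\sum_{n=2}^\infty a_n z^n$ the numerator of the expression in Definition \ref{tnm11} equals $z-\sum_{n=2}^\infty n(1+(n-1)\gamma)a_n z^n$ and the denominator equals $z-\sum_{n=2}^\infty (1+(n-1)\gamma)(1+(n-1)\beta)a_n z^n$. Assuming $f\in T{{S}^{*}}C(\alpha,\beta;\gamma)$, the defining inequality (\ref{eq23}) holds for every $z\in U$; in particular I would take $z=r$ with $r\in(0,1)$. Because every $a_n\ge0$ and $r$ is real, both numerator and denominator are then real, and the denominator is positive on $(0,1)$ (it approaches $r$ as the tail becomes negligible and cannot vanish, else the quotient would be undefined). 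Hence the real-part condition collapses to the genuine inequality
\[
\frac{r-\sum_{n=2}^\infty n(1+(n-1)\gamma)a_n r^{n}}{r-\sum_{n=2}^\infty (1+(n-1)\gamma)(1+(n-1)\beta)a_n r^{n}}>\alpha .
\]

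Clearing the positive denominator and rearranging exactly as in the algebra of Theorem \ref{thrm21}, I would reduce this to
\[
\sum_{n=2}^\infty (1+(n-1)\gamma)\bigl(n-\alpha-(n-1)\alpha\beta\bigr)a_n\,r^{n-1}<1-\alpha,\qquad r\in(0,1).
\]
Each coefficient $(1+(n-1)\gamma)(n-\alpha-(n-1)\alpha\beta)$ is nonnegative, since $n-\alpha-(n-1)\alpha\beta\ge n(1-\alpha)>0$ for $n\ge 2$ and $\alpha,\beta\in[0,1)$, so the left-hand side is a nondecreasing function of $r$. The decisive step, and the only delicate one, is letting $r\to 1^-$: using that every partial sum is bounded by $1-\alpha$ uniformly in $r$, a monotone-convergence (Abel-type) argument gives $\sum_{n=2}^\infty(1+(n-1)\gamma)(n-\alpha-(n-1)\alpha\beta)a_n\le 1-\alpha$, which is precisely (\ref{eq24}). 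Finally, I would confirm sharpness by checking that the extremal functions $f_n$ in (\ref{eq25}) turn (\ref{eq24}) into an equality, completing the proof.
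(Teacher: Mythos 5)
Your proposal is correct and takes essentially the same route as the paper: the sufficiency is inherited from Theorem \ref{thrm21} (since $a_n\ge 0$ makes (\ref{eq24}) identical to (\ref{eq21})), and the necessity is obtained by restricting (\ref{eq23}) to real $z$ and letting $z\to 1^-$, with sharpness checked on the functions (\ref{eq25}). Your only departure is added rigor at the limit step --- verifying positivity of the denominator and using bounded partial sums with a monotone-convergence argument --- where the paper simply asserts the limiting inequality.
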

\begin{proof}
The proof of the sufficiency of the theorem can be proved similarly to the proof of Theorem \ref{thrm21}. \\
We will prove only the necessity of the theorem.  \\
Assume that $f\in T{{S}^{*}}C(\alpha ,\beta ;\gamma )$, $\alpha ,\beta \in \left[ 0,1 \right),\text{ }\gamma \in \left[ 0,1 \right]$; that is, 
$$\operatorname{Re}\left\{ \frac{z{f}'(z)+\gamma {{z}^{2}}{f}''(z)}{\gamma z\left( {f}'(z)+\beta z{f}''(z) \right)+(1-\gamma )\left( \beta z{f}'(z)+(1-\beta )f(z) \right)} \right\}>\alpha ,\text{ }z\in U.$$
By simple computation, we write
\begin{align*}
  & \operatorname{Re}\left\{ \frac{z{f}'(z)+\gamma {{z}^{2}}{f}''(z)}{\gamma z\left( {f}'(z)+\beta z{f}''(z) \right)+(1-\gamma )\left( \beta z{f}'(z)+(1-\beta )f(z) \right)} \right\} \\ 
 & \text{                   }=\operatorname{Re}\left\{ \frac{z-\sum\limits_{n=2}^{\infty }{n\left( 1+(n-1)\gamma  \right){{a}_{n}}}{{z}^{n}}}{z-\sum\limits_{n=2}^{\infty }{\left( 1+(n-1)\gamma  \right)\left( 1+(n-1)\beta  \right){{a}_{n}}}{{z}^{n}}} \right\}>\alpha .
\end{align*}
The last expression in the brackets of the above inequality is real if choose $z$ real. Hence, from the previous inequality letting $z\to 1$ through real values, we obtain
$$1-\sum\limits_{n=2}^{\infty }{n\left( 1+(n-1)\gamma  \right){{a}_{n}}}\ge \alpha \left\{ 1-\sum\limits_{n=2}^{\infty }{\left( 1+(n-1)\gamma  \right)\left( 1+(n-1)\beta  \right){{a}_{n}}} \right\}.$$
This follows
$$\sum\limits_{n=2}^{\infty }{\left( 1+(n-1)\gamma  \right)\left( n-\alpha -(n-1)\alpha \beta  \right)}{{a}_{n}}\le 1-\alpha ,$$
which is the same as the condition (\ref{eq24}). \\
Also, it is clear that the equality in (\ref{eq24}) is satisfied by the functions given by (\ref{eq25}). \\
Thus, the proof of Theorem \ref{thrm22} is completed. 
\end{proof}
Taking $\gamma =0$ and $\gamma =1$ in Theorem \ref{thrm22}, we can readily deduce the following results, respectively.
\begin{cor}\label{snc25}
The function $f(z)$ definition by (\ref{eq12}) belongs to the class $T{{S}^{*}}(\alpha ,\beta )$, $\alpha ,\beta \in \left[ 0,1 \right)$ if and only if     
$$\sum\limits_{n=2}^{\infty }{\left( n-\alpha -(n-1)\alpha \beta  \right)}{{a}_{n}}\le 1-\alpha .$$ 
The result is sharp for the functions 
$${{f}_{n}}(z)=z-\frac{1-\alpha }{n-\alpha -(n-1)\alpha \beta }{{z}^{n}},\text{ }z\in U,\text{ }n=2,3,...\text{ }.$$
\end{cor}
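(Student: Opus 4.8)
The plan is to obtain this result as a direct specialization of Theorem \ref{thrm22} with the parameter $\gamma$ set to $0$. The first thing I would note is the class identification recorded in the introduction, namely $T{{S}^{*}}C(\alpha ,\beta ;0)=T{{S}^{*}}(\alpha ,\beta )$. This identity follows immediately from Definition \ref{tnm11}: when $\gamma =0$, the numerator $z{f}'(z)+\gamma {{z}^{2}}{f}''(z)$ collapses to $z{f}'(z)$, and the denominator $\gamma z\left( {f}'(z)+\beta z{f}''(z) \right)+(1-\gamma )\left( \beta z{f}'(z)+(1-\beta )f(z) \right)$ collapses to $\beta z{f}'(z)+(1-\beta )f(z)$, so the defining inequality becomes exactly the condition defining ${{S}^{*}}(\alpha ,\beta )$, and intersecting with $T$ gives the asserted equality of classes.

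With this identification in hand, I would simply substitute $\gamma =0$ into the necessary-and-sufficient criterion (\ref{eq24}) of Theorem \ref{thrm22}. The only algebraic observation required is that the weight $\left( 1+(n-1)\gamma  \right)$ reduces to $1$ for every $n\ge 2$ when $\gamma =0$, so the left-hand sum
$$\sum\limits_{n=2}^{\infty }{\left( 1+(n-1)\gamma  \right)\left( n-\alpha -(n-1)\alpha \beta  \right)}{{a}_{n}}$$
becomes precisely $\sum\limits_{n=2}^{\infty }{\left( n-\alpha -(n-1)\alpha \beta  \right)}{{a}_{n}}$, while the bound $1-\alpha$ is unchanged. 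Since $f\in T{{S}^{*}}(\alpha ,\beta )$ is equivalent to $f\in T{{S}^{*}}C(\alpha ,\beta ;0)$, Theorem \ref{thrm22} yields the stated equivalence directly.

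For the sharpness claim, I would specialize the extremal functions (\ref{eq25}) in the same way. Setting $\gamma =0$ in ${{f}_{n}}(z)=z-\dfrac{1-\alpha }{\left( 1+(n-1)\gamma  \right)\left( n-\alpha -(n-1)\alpha \beta  \right)}{{z}^{n}}$ turns the denominator factor $\left( 1+(n-1)\gamma  \right)$ into $1$, producing exactly the functions ${{f}_{n}}(z)=z-\dfrac{1-\alpha }{n-\alpha -(n-1)\alpha \beta }{{z}^{n}}$ asserted in the corollary; substituting these back shows the criterion holds with equality, so the bound cannot be improved.

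There is no genuine obstacle in this argument, since everything is a routine substitution; the only point deserving a moment of care is the verification that the class $T{{S}^{*}}(\alpha ,\beta )$ really does coincide with $T{{S}^{*}}C(\alpha ,\beta ;0)$, which I would confirm by the collapse of the numerator and denominator in Definition \ref{tnm11} as described above, so that Theorem \ref{thrm22} applies verbatim.
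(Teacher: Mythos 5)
Your proposal is correct and matches the paper's own derivation: the paper obtains Corollary \ref{snc25} precisely by setting $\gamma =0$ in Theorem \ref{thrm22}, exactly as you do. Your extra verification that $T{{S}^{*}}C(\alpha ,\beta ;0)=T{{S}^{*}}(\alpha ,\beta )$ via the collapse of Definition \ref{tnm11} is a sound (and slightly more careful) spelling-out of a step the paper leaves implicit.
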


\begin{cor}\label{snc26}
The function   definition by (\ref{eq12}) belongs to the class $TC(\alpha ,\beta )$, $\alpha ,\beta \in \left[ 0,1 \right)$ if and only if    
$$\sum\limits_{n=2}^{\infty }{n\left( n-\alpha -(n-1)\alpha \beta  \right)}{{a}_{n}}\le 1-\alpha. $$
The result is sharp for the functions
$${{f}_{n}}(z)=z-\frac{1-\alpha }{n\left( n-\alpha -(n-1)\alpha \beta  \right)}{{z}^{n}},\text{ }z\in U,\text{ }n=2,3,...\text{ }.$$ 
\end{cor}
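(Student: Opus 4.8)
The plan is to obtain this corollary as the immediate specialization of Theorem~\ref{thrm22} at the endpoint $\gamma=1$, exactly as the connecting sentence before the corollary announces. First I would invoke the identity $T{{S}^{*}}C(\alpha ,\beta ;1)=TC(\alpha ,\beta )$ recorded among the special cases following Definition~\ref{tnm11}; this identifies the class named in the corollary with the $\gamma=1$ instance of the general class characterized in Theorem~\ref{thrm22}. Since $\gamma=1$ lies in the admissible range $\left[0,1\right]$, Theorem~\ref{thrm22} applies verbatim, so a function $f\in T$ of the form (\ref{eq12}) belongs to $TC(\alpha ,\beta )$ if and only if the inequality (\ref{eq24}) holds with $\gamma=1$.

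It then remains only to carry out the coefficient simplification. Setting $\gamma=1$ gives $1+(n-1)\gamma=1+(n-1)=n$, so the weight $\left(1+(n-1)\gamma\right)\left(n-\alpha-(n-1)\alpha\beta\right)$ collapses to $n\left(n-\alpha-(n-1)\alpha\beta\right)$. Substituting this into (\ref{eq24}) yields precisely the stated criterion $\sum_{n=2}^{\infty}n\left(n-\alpha-(n-1)\alpha\beta\right)a_n\le 1-\alpha$, establishing both directions of the biconditional at once.

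For sharpness, the extremal family (\ref{eq25}) of Theorem~\ref{thrm22} specializes under the same substitution: replacing $\left(1+(n-1)\gamma\right)$ by $n$ in the denominator turns each $f_n$ into $z-\dfrac{1-\alpha}{n\left(n-\alpha-(n-1)\alpha\beta\right)}z^n$, which is exactly the family claimed to attain equality. Because the whole derivation is a single substitution into an already-proved equivalence, no genuine obstacle arises; the only points worth checking are that the endpoint value $\gamma=1$ is indeed covered by the hypotheses of Theorem~\ref{thrm22} (it is, as the interval is closed at $1$), and that the arithmetic reduction $1+(n-1)=n$ is applied consistently in both the summation criterion and the extremal functions.
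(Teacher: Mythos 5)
Your proposal is correct and matches the paper's own derivation exactly: the paper obtains Corollary \ref{snc26} precisely by setting $\gamma=1$ in Theorem \ref{thrm22}, using $1+(n-1)\cdot 1=n$ to reduce both the coefficient condition (\ref{eq24}) and the extremal functions (\ref{eq25}) to the stated forms. Your additional checks (that $\gamma=1$ lies in the closed admissible interval and that the class identification $T{S}^{*}C(\alpha,\beta;1)=TC(\alpha,\beta)$ is the one recorded after Definition \ref{tnm11}) are exactly the implicit steps the paper leaves to the reader.
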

Setting $\beta =0$ in Corollary \ref{snc25} and \ref{snc26}, we can readily deduce the following results, respectively.
\begin{cor}\label{snc27}
The function $f(z)$ definition by (\ref{eq12}) belongs to the class $T{{S}^{*}}(\alpha )$, $\alpha \in \left[ 0,1 \right)$ if and only if       
$$\sum\limits_{n=2}^{\infty }{\left( n-\alpha  \right)}{{a}_{n}}\le 1-\alpha .$$ 
The result is sharp for the functions 
$${{f}_{n}}(z)=z-\frac{1-\alpha }{n-\alpha }{{z}^{n}},\text{ }z\in U,\text{ }n=2,3,...\text{ }.$$
\end{cor}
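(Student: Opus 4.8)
The plan is to obtain this corollary as the immediate $\beta=0$ specialization of Corollary~\ref{snc25}, exactly as the sentence preceding the statement indicates. First I would record the identification of the function classes: setting $\beta=0$ in the definition of ${{S}^{*}}(\alpha,\beta)$ collapses the denominator $\beta z{f}'(z)+(1-\beta)f(z)$ to $f(z)$, so the defining inequality becomes $\operatorname{Re}\bigl(z{f}'(z)/f(z)\bigr)>\alpha$, which is precisely the condition for $S^*(\alpha)$. Intersecting with $T$ then gives $T{{S}^{*}}(\alpha,0)=T{{S}^{*}}(\alpha)$, so the corollary is genuinely a statement about the same class.

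With the class identity in hand, I would substitute $\beta=0$ into the coefficient inequality of Corollary~\ref{snc25}. The cross term $(n-1)\alpha\beta$ vanishes, so the factor $\bigl(n-\alpha-(n-1)\alpha\beta\bigr)$ reduces to $(n-\alpha)$ and the necessary-and-sufficient condition $\sum_{n=2}^{\infty}\bigl(n-\alpha-(n-1)\alpha\beta\bigr)a_n\le 1-\alpha$ becomes $\sum_{n=2}^{\infty}(n-\alpha)a_n\le 1-\alpha$, as claimed. For sharpness I would make the same substitution in the extremal functions of Corollary~\ref{snc25}: the denominator $n-\alpha-(n-1)\alpha\beta$ becomes $n-\alpha$, yielding $f_n(z)=z-\dfrac{1-\alpha}{n-\alpha}z^n$, and these functions attain equality in the reduced inequality term by term.

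There is essentially no analytic obstacle here, since the argument is a direct parameter substitution once Corollary~\ref{snc25} is granted. The only point requiring a moment's care is the class-level identification $T{{S}^{*}}(\alpha,0)=T{{S}^{*}}(\alpha)$; this is immediate from the definitions but should be stated explicitly so that the substitution in the coefficient criterion is known to describe membership in $T{{S}^{*}}(\alpha)$ rather than merely in $T{{S}^{*}}(\alpha,0)$. Alternatively, one could bypass Corollary~\ref{snc25} and give a self-contained proof mirroring Theorem~\ref{thrm22} with $\gamma=0,\ \beta=0$ throughout, but the specialization route is shorter and is the one suggested by the surrounding text.
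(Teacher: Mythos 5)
Your proposal is correct and follows essentially the same route as the paper, which obtains Corollary~\ref{snc27} precisely by setting $\beta=0$ in Corollary~\ref{snc25} (itself the $\gamma=0$ case of Theorem~\ref{thrm22}). Your explicit remark that $T{{S}^{*}}(\alpha,0)=T{{S}^{*}}(\alpha)$ at the level of the defining condition is a small point of care the paper leaves implicit, but it does not change the argument.
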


\begin{cor}\label{snc28}
The function $f(z)$ definition by (\ref{eq12}) belongs to the class $TC(\alpha )$, $\alpha \in \left[ 0,1 \right)$ if and only if        
$$\sum\limits_{n=2}^{\infty }{n\left( n-\alpha  \right)}{{a}_{n}}\le 1-\alpha .$$ 
The result is sharp for the functions 
$${{f}_{n}}(z)=z-\frac{1-\alpha }{n\left( n-\alpha  \right)}{{z}^{n}},\text{ }z\in U,\text{ }n=2,3,...\text{ }.$$
\end{cor}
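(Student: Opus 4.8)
The plan is to obtain this corollary as the direct specialization $\beta = 0$ of Corollary \ref{snc26}, exactly as the sentence preceding the statement announces. The first step is to verify the class identity $TC(\alpha,0) = TC(\alpha)$. This follows immediately from the definitions: taking $\beta = 0$ in $C(\alpha,\beta)$ gives
$$C(\alpha,0) = \left\{ f \in A:\ \operatorname{Re}\left( \frac{{f}'(z) + z{f}''(z)}{{f}'(z)} \right) > \alpha,\ z \in U \right\} = \left\{ f \in A:\ \operatorname{Re}\left( 1 + \frac{z{f}''(z)}{{f}'(z)} \right) > \alpha \right\} = C(\alpha),$$
and intersecting both sides with $T$ yields $TC(\alpha,0) = TC(\alpha)$, which is also recorded among the special cases listed after Definition \ref{tnm11}.

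Having identified the classes, I would invoke Corollary \ref{snc26}, which characterizes membership in $TC(\alpha,\beta)$ by the inequality $\sum_{n=2}^{\infty} n\left( n - \alpha - (n-1)\alpha\beta \right) a_n \le 1-\alpha$. Substituting $\beta = 0$ collapses the coefficient factor, since $n - \alpha - (n-1)\alpha \cdot 0 = n - \alpha$, so the characterizing condition becomes
$$\sum_{n=2}^{\infty} n\left( n - \alpha \right) a_n \le 1 - \alpha,$$
which is precisely the stated necessary and sufficient condition. Because the equivalence in Corollary \ref{snc26} holds in both directions, no further argument is needed to establish the ``if and only if''.

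For sharpness, I would set $\beta = 0$ in the extremal functions (\ref{eq25}) of Corollary \ref{snc26}, which produces
$${{f}_{n}}(z) = z - \frac{1-\alpha}{n\left( n - \alpha \right)} {{z}^{n}},\quad z \in U,\ n = 2,3,\dots,$$
and one checks that each such $f_n$ attains equality in the condition above, exactly as claimed. I do not anticipate any genuine obstacle here: the entire content is a parameter substitution into an already-proved equivalence, and the only point requiring a moment's care is confirming the definitional identification $TC(\alpha,0) = TC(\alpha)$ so that the specialized inequality is indeed a criterion for the intended class.
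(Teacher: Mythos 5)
Your proposal is correct and follows exactly the paper's own route: the paper derives Corollary \ref{snc28} by setting $\beta=0$ in Corollary \ref{snc26} (itself the $\gamma=1$ case of Theorem \ref{thrm22}), just as you do. Your explicit verification that $TC(\alpha,0)=TC(\alpha)$ is a small point the paper leaves implicit, but otherwise the arguments coincide.
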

\begin{rem}\label{yrm22}
The results obtained by Corollary \ref{snc27} and Corollary \ref{snc28} would reduce to known results in \cite{Alt?ntas88}. 
\end{rem}
\begin{rem}\label{yrm23}
Further consequences of the properties given by Corollary \ref{snc27} and Corollary \ref{snc28} can be obtained for each of the classes studied by earlier researches, by specializing the various parameters involved. Many of these consequences were proved by earlier researches on the subject (cf., e.g., \cite{Silverman}).
\end{rem}
From Theorem \ref{thrm22}, we obtain the following theorem on the coefficient bound estimates.
\begin{thm}\label{thrm23}
Let the function definition by (\ref{eq12}) $f\in T{{S}^{*}}C(\alpha ,\beta ;\gamma )$, $\alpha ,\beta \in \left[ 0,1 \right),\gamma \in \left[ 0,1 \right]$. Then  
$${{a}_{n}}\le \frac{1-\alpha }{\left( 1+(n-1)\gamma  \right)\left( n-\alpha -(n-1)\alpha \beta  \right)},n=2,3,...~.$$ 
\end{thm}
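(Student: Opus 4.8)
The plan is to deduce this bound directly from the characterization already established in Theorem~\ref{thrm22}, so that no fresh analytic work is needed. Since $f$ is assumed to lie in $T{{S}^{*}}C(\alpha ,\beta ;\gamma )$ and is of the form (\ref{eq12}), the necessity direction of Theorem~\ref{thrm22} guarantees that the coefficient inequality (\ref{eq24}) holds, namely
$$\sum_{n=2}^{\infty}\left(1+(n-1)\gamma\right)\left(n-\alpha-(n-1)\alpha\beta\right)a_n\le 1-\alpha.$$
The entire argument then rests on isolating a single term of this convergent series of nonnegative numbers.

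First I would verify that every summand is nonnegative. For $n\ge 2$ and $\gamma\in[0,1]$ the factor $1+(n-1)\gamma$ is clearly positive, and since $a_n\ge 0$ for functions in $T$, it remains only to confirm that $n-\alpha-(n-1)\alpha\beta>0$. Writing this quantity as $n-\alpha\left(1+(n-1)\beta\right)$ and using $\alpha<1$ together with $1+(n-1)\beta<1+(n-1)=n$ (which follows from $\beta<1$), one obtains $\alpha\left(1+(n-1)\beta\right)<n$, so the factor is strictly positive. Hence each term of the series is genuinely nonnegative.

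Once positivity is in hand, the key step is immediate: for any fixed index $n\ge 2$, a single term of a series of nonnegative numbers cannot exceed the whole sum, so
$$\left(1+(n-1)\gamma\right)\left(n-\alpha-(n-1)\alpha\beta\right)a_n\le\sum_{k=2}^{\infty}\left(1+(k-1)\gamma\right)\left(k-\alpha-(k-1)\alpha\beta\right)a_k\le 1-\alpha.$$
Dividing through by the strictly positive coefficient $\left(1+(n-1)\gamma\right)\left(n-\alpha-(n-1)\alpha\beta\right)$ then yields the asserted estimate.

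The proof is entirely elementary, and there is no substantive obstacle here; the only point that requires any care is the positivity check in the second step, since the final division is legitimate precisely because that coefficient does not vanish. I would also note, for completeness, that the estimate is sharp, as witnessed by the extremal functions (\ref{eq25}), for which the inequality (\ref{eq24}) reduces to an equality carried entirely by the term of index $n$.
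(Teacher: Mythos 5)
Your proposal is correct and matches the paper's intent exactly: the paper states Theorem~\ref{thrm23} as an immediate consequence of the necessity part of Theorem~\ref{thrm22}, which is precisely your route of isolating one nonnegative term of the series in (\ref{eq24}) and dividing by the positive coefficient. Your explicit positivity check of the factor $n-\alpha-(n-1)\alpha\beta$ is a detail the paper leaves tacit, and it is carried out correctly.
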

\begin{cor}\label{snc29}
Let the function definition by (\ref{eq12}) $f\in T{{S}^{*}}(\alpha ,\beta )$, $\alpha ,\beta \in \left[ 0,1 \right)$. Then  
$${{a}_{n}}\le \frac{1-\alpha }{n-\alpha -(n-1)\alpha \beta },n=2,3,...~.$$ 
\end{cor}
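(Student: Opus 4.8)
The plan is to obtain this estimate as a direct specialization of Theorem \ref{thrm23}. Recall from the list of special cases following Definition \ref{tnm11} that $T{{S}^{*}}C(\alpha ,\beta ;0)=T{{S}^{*}}(\alpha ,\beta )$, so any $f\in T{{S}^{*}}(\alpha ,\beta )$ is precisely a member of $T{{S}^{*}}C(\alpha ,\beta ;\gamma )$ with $\gamma =0$. Hence I would invoke Theorem \ref{thrm23} with $\gamma =0$ and simplify the resulting bound. Setting $\gamma =0$, the factor $1+(n-1)\gamma$ reduces to $1$, and the estimate
$$a_{n}\le \frac{1-\alpha}{\left(1+(n-1)\gamma\right)\left(n-\alpha-(n-1)\alpha\beta\right)}$$
collapses to
$$a_{n}\le \frac{1-\alpha}{n-\alpha-(n-1)\alpha\beta},\quad n=2,3,\ldots,$$
which is exactly the assertion.

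Alternatively, one can argue directly from Corollary \ref{snc25}, which characterizes membership in $T{{S}^{*}}(\alpha ,\beta )$. Since $f\in T{{S}^{*}}(\alpha ,\beta )$ has nonnegative coefficients $a_{n}\ge 0$ and satisfies
$$\sum\limits_{n=2}^{\infty}\left(n-\alpha-(n-1)\alpha\beta\right)a_{n}\le 1-\alpha,$$
every individual term is dominated by the full (nonnegative) sum, so for each fixed $n$ one has $\left(n-\alpha-(n-1)\alpha\beta\right)a_{n}\le 1-\alpha$, and dividing yields the claim.

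The only point that requires care is that the denominator $n-\alpha-(n-1)\alpha\beta$ be strictly positive, so that the division is legitimate and the bound meaningful. This is immediate from the parameter ranges: for $n\ge 2$ and $\alpha,\beta\in[0,1)$ one has $\alpha\left(1+(n-1)\beta\right)<1+(n-1)\beta<n$, where the first inequality uses $\alpha<1$ together with $1+(n-1)\beta\ge 1>0$, and the second uses $\beta<1$ with $n-1\ge 1$. Consequently $n-\alpha-(n-1)\alpha\beta=n-\alpha\left(1+(n-1)\beta\right)>0$. With this positivity secured, either route delivers the stated coefficient bound, so I expect no genuine obstacle beyond recording this elementary sign check.
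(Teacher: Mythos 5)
Your proposal is correct and follows essentially the same route as the paper: Corollary \ref{snc29} is obtained there precisely by specializing Theorem \ref{thrm23} at $\gamma=0$, using the identification $T{{S}^{*}}C(\alpha,\beta;0)=T{{S}^{*}}(\alpha,\beta)$. Your alternative argument via Corollary \ref{snc25} (termwise domination of a nonnegative series) and your explicit check that $n-\alpha-(n-1)\alpha\beta>0$ are sound but merely make explicit what the paper leaves implicit.
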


\begin{cor}\label{snc210}
Let the function definition by (\ref{eq12}) $f\in TC(\alpha ,\beta )$, $\alpha ,\beta \in \left[ 0,1 \right)$. Then   
$${{a}_{n}}\le \frac{1-\alpha }{n\left( n-\alpha -(n-1)\alpha \beta  \right)},n=2,3,...\text{ }.$$ 
\end{cor}
Setting $\beta =0$ in Corollary \ref{snc29} and \ref{snc210}, we can readily deduce the following results, respectively.
\begin{cor}\label{snc211}
Let the function definition by (\ref{eq12}) $f\in T{{S}^{*}}(\alpha )$, $\alpha \in \left[ 0,1 \right)$. Then   
$${{a}_{n}}\le \frac{1-\alpha }{n-\alpha },n=2,3,... \text{ }.$$
\end{cor}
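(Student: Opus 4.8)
The plan is to obtain this as an immediate specialization of the coefficient estimate already established in Theorem~\ref{thrm23}, using the parameter identification recorded among the special cases, namely $T{{S}^{*}}(\alpha)=T{{S}^{*}}C(\alpha,0;0)$. Equivalently, since Corollary~\ref{snc29} is exactly the $\gamma=0$ instance of Theorem~\ref{thrm23} giving the bound for $T{{S}^{*}}(\alpha,\beta)$, I would simply set $\beta=0$ there, matching the sentence that introduces this corollary.

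Concretely, I would start from the general bound
$$a_n\le\frac{1-\alpha}{\left(1+(n-1)\gamma\right)\left(n-\alpha-(n-1)\alpha\beta\right)}$$
and substitute $\gamma=0$ and $\beta=0$. The first factor collapses because $1+(n-1)\cdot 0=1$, while the $\beta$-dependent term vanishes because $(n-1)\alpha\cdot 0=0$, so the denominator reduces to $n-\alpha$. This yields
$$a_n\le\frac{1-\alpha}{n-\alpha},\quad n=2,3,\ldots,$$
which is precisely the asserted estimate.

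There is essentially no obstacle: the argument is a mechanical substitution into an already-proved inequality, and the only point worth verifying is that the class identity $T{{S}^{*}}(\alpha)=T{{S}^{*}}C(\alpha,0;0)$ used to license the substitution is consistent with Definition~\ref{tnm11}, which it is. If a self-contained derivation is preferred over a specialization, I would instead invoke the membership characterization in Corollary~\ref{snc27}: for $f\in T$ one has $a_n\ge 0$, and since $n-\alpha>0$ for all $n\ge 2$, every summand in $\sum_{n=2}^{\infty}(n-\alpha)a_n\le 1-\alpha$ is nonnegative, so each individual term is dominated by the full sum. Hence $(n-\alpha)a_n\le 1-\alpha$, giving the stated bound term by term.
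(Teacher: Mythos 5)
Your proposal is correct and follows exactly the paper's route: the paper obtains this corollary by setting $\beta=0$ in Corollary~\ref{snc29}, which is itself the $\gamma=0$ case of Theorem~\ref{thrm23}, precisely the mechanical substitution you carry out. Your alternative term-by-term derivation from Corollary~\ref{snc27} is also valid (and is in fact how Theorem~\ref{thrm23} follows from Theorem~\ref{thrm22}), but it is not needed beyond the specialization.
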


\begin{cor}\label{snc212}
Let the function definition by (\ref{eq12}) $f\in TC(\alpha )$, $\alpha \in \left[ 0,1 \right)$. Then    
$${{a}_{n}}\le \frac{1-\alpha }{n\left( n-\alpha  \right)},n=2,3,... \text{ }.$$
\end{cor}
\begin{rem}\label{yrm24}
Further consequences on the coefficient bound estimates given by Corollary \ref{snc211} and Corollary \ref{snc212} can be obtained for each of the classes studied by earlier researches, by specializing the various parameters involved. 
\end{rem}

\section{\large Conditions for the analytic functions involving Gamma\\ function}
In this section, we will examine geometric properties of analytic functions involving Gamma function. For these functions, we give conditions to be in these classes ${{S}^{*}}C(\alpha ,\beta ;\gamma )$ and $T{{S}^{*}}C(\alpha ,\beta ;\gamma )$. \\
Let us define the function ${{F}_{\lambda ,\mu }}:\mathbb{C}\to \mathbb{C}$ by 

\begin{align}\label{eq31}
  & {{F}_{\lambda ,\mu }}(z)=z+\sum\limits_{n=2}^{\infty }{\frac{\Gamma (\mu )}{\Gamma (\lambda (n-1)+\mu )}\frac{{{e}^{-1/\mu }}}{(n-1)!}{{z}^{n}}=z+\left( {{W}_{\lambda ,\mu }}(z)-z \right){{e}^{-1/\mu }},\text{ }} \\ \nonumber
 & z\in U,\lambda >-1,\mu >0, 
\end{align}
where $\Gamma (\mu )$ is Euler gamma function and ${{W}_{\lambda ,\mu }}(z)$ is normalized Wright function (see, for details \cite{Wright}). \\
We define also the function                                 
\begin{equation}\label{eq32}
{{G}_{\lambda ,\mu }}(z)=2z-{{F}_{\lambda ,\mu }}(z)=z-\sum\limits_{n=2}^{\infty }{\frac{\Gamma (\mu )}{\Gamma (\lambda (n-1)+\mu )}\frac{{{e}^{-1/\mu }}}{(n-1)!}{{z}^{n}},\text{ }z\in U}.
\end{equation}
It is clear that ${{F}_{\lambda ,\mu }}\in A$ and ${{G}_{\lambda ,\mu }}\in T$, respectively. \\
We will give sufficient condition for the function ${{F}_{\lambda ,\mu }}(z)$ defined by (\ref{eq31}), belonging to the class ${{S}^{*}}C(\alpha ,\beta ;\gamma )$, and necessary and sufficient condition for the function ${{G}_{\lambda ,\mu }}(z)$ defined by (\ref{eq32}), belonging to the class $T{{S}^{*}}C(\alpha ,\beta ;\gamma )$, respectively. 
\begin{thm}\label{thrm31}
Let $\lambda \ge 1,\mu >0.462$ and the following condition is satisfied 
\begin{equation}\label{eq33}
\left\{ \left( 1-\alpha \beta  \right)\gamma +\left[ 1-\alpha \beta +\left( 2-(1+\beta )\alpha  \right)\gamma  \right]\mu  \right\}{{\mu }^{-2}}{{e}^{1/\mu }}\le 1-\alpha .
\end{equation}
Then, the function ${{F}_{\lambda ,\mu }}(z)$defined by (\ref{eq31}) is in the class ${{S}^{*}}C(\alpha ,\beta ;\gamma ),~\alpha ,\beta \in \left[ 0,1 \right),\gamma \in \left[ 0,1 \right]$.
\end{thm}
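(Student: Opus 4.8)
The plan is to reduce the claim to the sufficient coefficient condition of Theorem~\ref{thrm21}. The function $F_{\lambda,\mu}$ defined by (\ref{eq31}) is of the form (\ref{eq11}) with nonnegative coefficients $a_n=\frac{\Gamma(\mu)}{\Gamma(\lambda(n-1)+\mu)}\frac{e^{-1/\mu}}{(n-1)!}$, so $|a_n|=a_n$ and, by Theorem~\ref{thrm21}, it suffices to verify that
$$\sum_{n=2}^{\infty}\left(1+(n-1)\gamma\right)\left(n-\alpha-(n-1)\alpha\beta\right)a_n\le 1-\alpha.$$
Thus the entire argument is a single upper estimate of this series, and the role of hypothesis (\ref{eq33}) will be exactly to make that estimate come out $\le 1-\alpha$.

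First I would bound the Gamma-quotient. For $\lambda\ge 1$ and $n\ge 2$ one has $\lambda(n-1)+\mu\ge (n-1)+\mu$, and since $\mu>0.462$ the smaller argument satisfies $(n-1)+\mu\ge 1+\mu>1.4616\ldots$, which lies to the right of the unique minimum of $\Gamma$ on $(0,\infty)$; hence $\Gamma$ is increasing there and $\Gamma(\lambda(n-1)+\mu)\ge\Gamma((n-1)+\mu)$. This gives $\frac{\Gamma(\mu)}{\Gamma(\lambda(n-1)+\mu)}\le\frac{\Gamma(\mu)}{\Gamma((n-1)+\mu)}=\frac{1}{(\mu)_{n-1}}\le\frac{1}{\mu^{\,n-1}}$, where $(\mu)_{n-1}=\mu(\mu+1)\cdots(\mu+n-2)$, so that $a_n\le \frac{e^{-1/\mu}}{\mu^{\,n-1}(n-1)!}$. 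The precise constant $0.462$ is forced by the requirement that the $n=2$ argument $1+\mu$ already exceed the abscissa $\approx 1.4616$ of the minimum of $\Gamma$. This monotonicity/threshold step is the only place where $\lambda\ge 1$ and $\mu>0.462$ are genuinely used, and I expect it to be the main obstacle to state cleanly.

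Next I would insert this bound and reindex by $m=n-1$. Expanding the polynomial factor yields $\left(1+m\gamma\right)\left(m+1-\alpha-m\alpha\beta\right)=\gamma(1-\alpha\beta)m^{2}+\left[1-\alpha\beta+\gamma(1-\alpha)\right]m+(1-\alpha)$, so with $x=1/\mu$ the series is dominated by
$$e^{-x}\sum_{m=1}^{\infty}\frac{P(m)}{m!}\,x^{m},\qquad P(m)=\gamma(1-\alpha\beta)m^{2}+\left[1-\alpha\beta+\gamma(1-\alpha)\right]m+(1-\alpha).$$
I would then evaluate the three elementary sums $\sum_{m\ge1}\frac{x^{m}}{m!}=e^{x}-1$, $\sum_{m\ge1}\frac{m\,x^{m}}{m!}=xe^{x}$ and $\sum_{m\ge1}\frac{m^{2}x^{m}}{m!}=(x^{2}+x)e^{x}$, and collect terms; after multiplying by $e^{-x}$ the series is at most $\gamma(1-\alpha\beta)(x^{2}+x)+\left[1-\alpha\beta+\gamma(1-\alpha)\right]x+(1-\alpha)(1-e^{-x})$.

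Finally I would rearrange. Demanding that this last quantity be $\le 1-\alpha$ cancels the free $1-\alpha$ and is equivalent to
$$\gamma(1-\alpha\beta)x^{2}+\left[1-\alpha\beta+\left(2-(1+\beta)\alpha\right)\gamma\right]x\le(1-\alpha)e^{-x},$$
where the coefficient of $x$ combines as $\gamma(1-\alpha\beta)+(1-\alpha\beta)+\gamma(1-\alpha)=1-\alpha\beta+\left(2-(1+\beta)\alpha\right)\gamma$. Multiplying through by $e^{x}$ and substituting $x=1/\mu$ turns this into $\{(1-\alpha\beta)\gamma+[1-\alpha\beta+(2-(1+\beta)\alpha)\gamma]\mu\}\mu^{-2}e^{1/\mu}\le 1-\alpha$, which is precisely hypothesis (\ref{eq33}). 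Hence (\ref{eq33}) forces the Theorem~\ref{thrm21} sum below $1-\alpha$, so $F_{\lambda,\mu}\in {S}^{*}C(\alpha,\beta;\gamma)$. Apart from the Gamma-monotonicity argument of the second paragraph, everything is routine bookkeeping on convergent exponential series.
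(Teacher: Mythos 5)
Your proposal is correct and follows essentially the same route as the paper: reduction to the coefficient condition of Theorem \ref{thrm21}, the bounds $\Gamma(\mu)/\Gamma(\lambda(n-1)+\mu)\le 1/(\mu)_{n-1}\le \mu^{-(n-1)}$, decomposition of the quadratic factor, summation of the resulting exponential series, and rearrangement into (\ref{eq33}); your expansion $P(m)=\gamma(1-\alpha\beta)m^{2}+[1-\alpha\beta+\gamma(1-\alpha)]m+(1-\alpha)$ is algebraically identical to the paper's falling-factorial split $(n-2)(n-1)(1-\alpha\beta)\gamma+(n-1)(1-\alpha\beta+(2-(1+\beta)\alpha)\gamma)+(1-\alpha)$, differing only in bookkeeping. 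A small bonus on your side: you actually justify why $\mu>0.462$ makes $\Gamma(\lambda(n-1)+\mu)\ge\Gamma((n-1)+\mu)$ (the argument $1+\mu$ lies to the right of the minimum of $\Gamma$ at $\approx 1.4616$), a step the paper asserts without explanation.
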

\begin{proof}
Since ${{F}_{\lambda ,\mu }}\in A$ and 
$${{F}_{\lambda ,\mu }}(z)=z+\sum\limits_{n=2}^{\infty }{\frac{\Gamma (\mu )}{\Gamma (\lambda (n-1)+\mu )}\frac{{{e}^{-1/\mu }}}{(n-1)!}{{z}^{n}}\text{ }},$$
in view of Theorem \ref{thrm21}, it suffices show that 
\begin{equation}\label{eq34}
\sum\limits_{n=2}^{\infty }{\left( 1+(n-1)\gamma  \right)\left( n-\alpha -(n-1)\alpha \beta  \right)}\frac{\Gamma (\mu )}{\Gamma (\lambda (n-1)+\mu )}\frac{{{e}^{-1/\mu }}}{(n-1)!}\le 1-\alpha .
\end{equation}
Let
$${{L}_{1}}(p;\alpha ,\beta ;\gamma )=\sum\limits_{n=2}^{\infty }{\left( 1+(n-1)\gamma  \right)\left( n-\alpha -(n-1)\alpha \beta  \right)}\frac{\Gamma (\mu )}{\Gamma (\lambda (n-1)+\mu )}\frac{{{e}^{-1/\mu }}}{(n-1)!}.$$
Under hypothesis $\lambda \ge 1$, the inequality $\Gamma (n-1+\mu )\le \Gamma (\lambda (n-1)+\mu ),\text{ }n\in \mathbb{N}$ holds for $\mu >0.462$, which is equivalent to 
\begin{equation}\label{eq35}
\frac{\Gamma (\mu )}{\Gamma (\lambda (n-1)+\mu )}\le \frac{1}{{{(\mu )}_{n-1}}},\text{ }n\in \mathbb{N}
\end{equation}
where ${{(\mu )}_{n}}=\Gamma (n+\mu )/\Gamma (\mu )=\mu (\mu +1)\cdot \cdot \cdot (\mu +n-1),\text{ }{{(\mu )}_{0}}=1$ is Pochhammer (or Appell) symbol, defined in terms of Euler gamma function.\\
Aslo, the inequality
\begin{equation}\label{eq36}
{{(\mu )}_{n-1}}=\mu (\mu +1)\cdot \cdot \cdot (\mu +n-2)\ge {{\mu }^{n-1}},\text{ }n\in \mathbb{N}
\end{equation}
is true, which is equivalent to $1/{{(\mu )}_{n-1}}\le 1/{{\mu }^{n-1}},\text{ }n\in \mathbb{N}$. \\
Setting 
\begin{align*}
  & \left( 1+(n-1)\gamma  \right)\left( n-\alpha -(n-1)\alpha \beta  \right) \\ 
 & \text{             }=(n-2)(n-1)\left( 1-\alpha \beta  \right)\gamma +(n-1)\left( 1-\alpha \beta +\left( 2-(1+\beta )\alpha  \right)\gamma  \right)+1-\alpha  
\end{align*}
and using (\ref{eq35}), (\ref{eq36}), we can easily write that
\begin{align*}
  & {{L}_{1}}(p;\alpha ,\beta ;\gamma )\le  \\ 
 & \sum\limits_{n=2}^{\infty }{\left\{ (n-2)(n-1)\left( 1-\alpha \beta  \right)\gamma +(n-1)\left( 1-\alpha \beta +\left( 2-(1+\beta )\alpha  \right)\gamma  \right)+1-\alpha  \right\}}\frac{{{e}^{-1/\mu }}}{{{\mu }^{n-1}}(n-1)!} \\ 
 & =\sum\limits_{n=3}^{\infty }{\frac{\left( 1-\alpha \beta  \right)\gamma }{(n-3)!}\frac{{{e}^{-1/\mu }}}{{{\mu }^{n-1}}}}+\sum\limits_{n=2}^{\infty }{\frac{1-\alpha \beta +\left( 2-(1+\beta )\alpha  \right)\gamma }{(n-2)!}\frac{{{e}^{-1/\mu }}}{{{\mu }^{n-1}}}+\sum\limits_{n=2}^{\infty }{\frac{(1-\alpha ){{e}^{-1/\mu }}}{{{\mu }^{n-1}}(n-1)!}}}  
\end{align*}
Thus, 
$${{L}_{1}}(p;\alpha ,\beta ;\gamma )\le \frac{\left( 1-\alpha \beta  \right)\gamma }{{{\mu }^{2}}}+\frac{1-\alpha \beta +\left( 2-(1+\beta )\alpha  \right)\gamma }{\mu }+(1-\alpha )\left( 1-{{e}^{-1/\mu }} \right).$$
From the last inequality we easily see that the inequality (\ref{eq34}) is true if last expression is bounded by $1-\alpha $, which is equivalent to (\ref{eq33}). \\
Thus, the proof of Theorem \ref{thrm31} is completed.
\end{proof}
Taking $\gamma =0$ and $\gamma =1$ in Theorem \ref{thrm31}, we arrive at the following results.
\begin{cor}\label{snc31}
Let $\lambda \ge 1,\mu >0.462$ and the following condition is satisfied
$$\left( 1-\alpha \beta  \right){{\mu }^{-1}}{{e}^{1/\mu }}\le 1-\alpha .$$
Then, the function ${{F}_{\lambda ,\mu }}(z)$ defined by (\ref{eq31}) is in the class ${{S}^{*}}(\alpha ,\beta ),~\alpha ,\beta \in \left[ 0,1 \right)$.
\end{cor}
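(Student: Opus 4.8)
The plan is to obtain this statement purely as the $\gamma=0$ specialization of Theorem \ref{thrm31}, so no independent argument is needed beyond verifying the substitution. First I would recall from the list of special cases following Definition \ref{tnm11} that ${{S}^{*}}C(\alpha ,\beta ;0)={{S}^{*}}(\alpha ,\beta )$; hence the claimed membership of ${{F}_{\lambda ,\mu }}$ in ${{S}^{*}}(\alpha ,\beta )$ is literally the conclusion of Theorem \ref{thrm31} at $\gamma=0$, provided the hypothesis (\ref{eq33}) collapses to the inequality stated in the corollary.

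Next I would carry out that reduction. Setting $\gamma=0$ in the bracketed factor of (\ref{eq33}) annihilates the term $\left(1-\alpha\beta\right)\gamma$ and the term $\left(2-(1+\beta)\alpha\right)\gamma\,\mu$, leaving only $\left(1-\alpha\beta\right)\mu$ inside the braces. Multiplying this by the factor ${{\mu}^{-2}}{{e}^{1/\mu}}$ yields $\left(1-\alpha\beta\right){{\mu}^{-1}}{{e}^{1/\mu}}$, so (\ref{eq33}) becomes exactly $\left(1-\alpha\beta\right){{\mu}^{-1}}{{e}^{1/\mu}}\le 1-\alpha$, which is precisely the hypothesis imposed in the corollary.

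With these two observations the argument closes at once: the conditions $\lambda\ge 1$ and $\mu>0.462$ are inherited verbatim, the displayed inequality is the $\gamma=0$ instance of (\ref{eq33}), and Theorem \ref{thrm31} then places ${{F}_{\lambda ,\mu }}$ in ${{S}^{*}}C(\alpha ,\beta ;0)={{S}^{*}}(\alpha ,\beta )$. Since the whole proof is a specialization, there is no genuine obstacle here; the only point requiring any care is the bookkeeping in collapsing the bracket of (\ref{eq33}) at $\gamma=0$, and that is entirely routine.
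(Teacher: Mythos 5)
Your proposal is correct and matches the paper exactly: the paper itself obtains Corollary \ref{snc31} by setting $\gamma =0$ in Theorem \ref{thrm31}, using the identification ${{S}^{*}}C(\alpha ,\beta ;0)={{S}^{*}}(\alpha ,\beta )$, and your reduction of condition (\ref{eq33}) to $\left( 1-\alpha \beta \right){{\mu }^{-1}}{{e}^{1/\mu }}\le 1-\alpha$ is the same routine substitution. Nothing further is needed.
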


\begin{cor}\label{snc32}
Let $\lambda \ge 1,\mu >0.462$ and the following condition is satisfied
$$\left\{ \left( 1-\alpha \beta  \right){{\mu }^{-2}}+\left( 3-2\alpha \beta -\alpha  \right){{\mu }^{-1}} \right\}{{e}^{1/\mu }}\le 1-\alpha  .$$
Then, the function ${{F}_{\lambda ,\mu }}(z)$ defined by (\ref{eq31}) is in the class $C(\alpha ,\beta ),\alpha ,\beta \in \left[ 0,1 \right)$.
\end{cor}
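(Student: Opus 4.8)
The plan is to derive Corollary \ref{snc32} as the $\gamma = 1$ instance of Theorem \ref{thrm31}. Recall from the special-case list following Definition \ref{tnm11} that ${{S}^{*}}C(\alpha ,\beta ;1)=C(\alpha ,\beta )$; hence membership of ${{F}_{\lambda ,\mu }}$ in $C(\alpha ,\beta )$ is exactly membership in ${{S}^{*}}C(\alpha ,\beta ;1)$, and the entire task reduces to substituting $\gamma = 1$ into the sufficient condition (\ref{eq33}) and rewriting it in the stated form.

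First I would substitute $\gamma = 1$ into the bracketed coefficient of (\ref{eq33}), namely $\left( 1-\alpha \beta  \right)\gamma +\left[ 1-\alpha \beta +\left( 2-(1+\beta )\alpha  \right)\gamma  \right]\mu$, obtaining $\left( 1-\alpha \beta  \right)+\left[ 1-\alpha \beta + 2-(1+\beta )\alpha  \right]\mu$. The one genuine computation is to simplify the inner bracket: expanding $(1+\beta )\alpha = \alpha + \alpha\beta$ gives $1-\alpha \beta + 2 - \alpha - \alpha\beta = 3 - 2\alpha\beta - \alpha$. Thus (\ref{eq33}) becomes $\left\{ \left( 1-\alpha \beta  \right)+\left( 3 - 2\alpha\beta - \alpha \right)\mu \right\}{{\mu }^{-2}}{{e}^{1/\mu }}\le 1-\alpha$.

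Finally I would distribute ${{\mu }^{-2}}$ over the two terms inside the braces, so that $\left( 1-\alpha \beta  \right)$ contributes $\left( 1-\alpha \beta  \right){{\mu }^{-2}}$ and $\left( 3 - 2\alpha\beta - \alpha \right)\mu$ contributes $\left( 3 - 2\alpha\beta - \alpha \right){{\mu }^{-1}}$, producing precisely the hypothesis $\left\{ \left( 1-\alpha \beta  \right){{\mu }^{-2}}+\left( 3-2\alpha \beta -\alpha  \right){{\mu }^{-1}} \right\}{{e}^{1/\mu }}\le 1-\alpha$ of the corollary. Theorem \ref{thrm31} then delivers ${{F}_{\lambda ,\mu }}\in {{S}^{*}}C(\alpha ,\beta ;1)=C(\alpha ,\beta )$. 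There is no real obstacle in this argument; the only point requiring attention is the correct expansion of $(1+\beta )\alpha$ in collapsing the coefficient, everything else being a direct specialization of the already-proved Theorem \ref{thrm31}.
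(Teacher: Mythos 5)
Your proposal is correct and matches the paper exactly: the paper itself obtains Corollary \ref{snc32} by setting $\gamma =1$ in Theorem \ref{thrm31} and using ${{S}^{*}}C(\alpha ,\beta ;1)=C(\alpha ,\beta )$, and your algebraic simplification $1-\alpha \beta +2-(1+\beta )\alpha =3-2\alpha \beta -\alpha$ together with distributing ${{\mu }^{-2}}$ is precisely the computation implicit in that specialization.
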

\begin{rem}\label{yrm31}
Further consequences of the results given by Corollary \ref{snc31} and Corollary \ref{snc32} can be obtained for each of the classes, by specializing the various parameters involved.
\end{rem}
\begin{thm}\label{thrm32}
Let $\lambda \ge 1,\mu >0.462$, then the function ${{G}_{\lambda ,\mu }}(z)$ defined by (3.2) belongs to the class $T{{S}^{*}}C(\alpha ,\beta ;\gamma )$, $\alpha ,\beta \in \left[ 0,1 \right),\gamma \in \left[ 0,1 \right]$ if 
\begin{equation}\label{eq37}
\left\{ \left( 1-\alpha \beta  \right)\gamma +\left[ 1-\alpha \beta +\left( 2-(1+\beta )\alpha  \right)\gamma  \right]\mu  \right\}{{\mu }^{-2}}{{e}^{1/\mu }}\le 1-\alpha .
\end{equation}
\end{thm}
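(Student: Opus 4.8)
The plan is to reduce the statement to the coefficient criterion already established in Theorem \ref{thrm22} and then reuse, almost verbatim, the estimate carried out in the proof of Theorem \ref{thrm31}. First I would observe that $G_{\lambda,\mu}(z)$, as written in (\ref{eq32}), has the form $z-\sum_{n=2}^{\infty}a_n z^n$ with
$$a_n=\frac{\Gamma(\mu)}{\Gamma(\lambda(n-1)+\mu)}\frac{e^{-1/\mu}}{(n-1)!}\ge 0,$$
the nonnegativity being immediate from $\lambda>-1$, $\mu>0$, so that indeed $G_{\lambda,\mu}\in T$. This is exactly the hypothesis needed to invoke the necessary-and-sufficient characterization of Theorem \ref{thrm22}.

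Next, by Theorem \ref{thrm22}, membership $G_{\lambda,\mu}\in TS^{*}C(\alpha,\beta;\gamma)$ holds if and only if
$$\sum_{n=2}^{\infty}\bigl(1+(n-1)\gamma\bigr)\bigl(n-\alpha-(n-1)\alpha\beta\bigr)\,a_n\le 1-\alpha,$$
and the left-hand side is precisely the quantity $L_1(p;\alpha,\beta;\gamma)$ appearing in the proof of Theorem \ref{thrm31}, since the coefficients $a_n$ of $G_{\lambda,\mu}$ coincide in absolute value with those of $F_{\lambda,\mu}$. Therefore no fresh computation is required: I would simply record that, under $\lambda\ge 1$ and $\mu>0.462$, the bounds (\ref{eq35}) and (\ref{eq36}) apply, and the same splitting of $\bigl(1+(n-1)\gamma\bigr)\bigl(n-\alpha-(n-1)\alpha\beta\bigr)$ into the three exponential-type series yields
$$L_1(p;\alpha,\beta;\gamma)\le\frac{(1-\alpha\beta)\gamma}{\mu^{2}}+\frac{1-\alpha\beta+\bigl(2-(1+\beta)\alpha\bigr)\gamma}{\mu}+(1-\alpha)\bigl(1-e^{-1/\mu}\bigr).$$

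Finally, I would note that this upper bound is dominated by $1-\alpha$ exactly when the right-hand side is $\le 1-\alpha$, which, after multiplying through by $e^{1/\mu}$ and rearranging, is equivalent to hypothesis (\ref{eq37}) — identical to (\ref{eq33}). Hence the coefficient inequality of Theorem \ref{thrm22} is satisfied and $G_{\lambda,\mu}\in TS^{*}C(\alpha,\beta;\gamma)$. There is no genuine obstacle here; the only point requiring care is the bookkeeping that ensures (\ref{eq37}) really is the sharp rearrangement of the derived estimate, together with the remark that Theorem \ref{thrm22} furnishes an \emph{if and only if} condition for functions in $T$, so the sufficiency direction alone gives the claim.
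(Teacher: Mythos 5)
Your proposal is correct and follows essentially the same route as the paper, whose proof of Theorem \ref{thrm32} consists precisely of the observation that the argument of Theorem \ref{thrm31} carries over verbatim: the coefficients of $G_{\lambda,\mu}$ agree in absolute value with those of $F_{\lambda,\mu}$, so the sufficiency direction of Theorem \ref{thrm22} reduces everything to the same bound on $L_1$, yielding condition (\ref{eq37}), identical to (\ref{eq33}). You have merely filled in the details the paper omits, including the (correct) check that $G_{\lambda,\mu}\in T$.
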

\begin{proof}
The proof of Theorem \ref{thrm32} is same of the proof of Theorem \ref{thrm31}. Therefore, the details of the proof of Theorem \ref{thrm32} may be omitted.
\end{proof}
\begin{rem}\label{yrm32}
Further consequences of the results given by Theorem \ref{thrm32} can be obtained for each of the classes, by specializing the various parameters involved.
\end{rem}
\section{\large Integral operators of the functions ${{F}_{\lambda ,\mu }}(z)$ and ${{G}_{\lambda ,\mu }}(z)$}

In this section, we will examine some inclusion properties of integral operators associated with the functions ${{F}_{\lambda ,\mu }}(z)$ and ${{G}_{\lambda ,\mu }}(z)$ as follows:
\begin{equation}\label{eq41}
{{\hat{F}}_{\lambda \mu }}(z)=\int\limits_{0}^{z}{\frac{{{F}_{\lambda \mu }}(t)}{t}dt} \text{  and  } {{\hat{G}}_{\lambda ,\mu }}(z)=\int\limits_{0}^{z}{\frac{{{G}_{\lambda ,\mu }}(t)}{t}dt}
\end{equation}

\begin{thm}\label{thrm41}
Let $\lambda \ge 1,\mu >0.462$ and the following condition is satisfied 
\begin{equation}\label{eq42}
\left\{ \left( 1-\alpha \beta  \right)\gamma {{\mu }^{-1}}+\left( 1-\beta  \right)\left( 1-\gamma  \right)\alpha \left( 1-{{e}^{-1/\mu }} \right) \right\}{{e}^{1/\mu }}\le 1-\alpha .
\end{equation}
Then, the function ${{\hat{F}}_{\lambda ,\mu }}(z)$ defined by (\ref{eq41}) is in the class ${{S}^{*}}C(\alpha ,\beta ;\gamma ),~\alpha ,\beta \in \left[ 0,1 \right),\gamma \in \left[ 0,1 \right]$.
\end{thm}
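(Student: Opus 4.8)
The plan is to reduce the statement to the sufficient coefficient criterion of Theorem \ref{thrm21}, exactly as was done for $F_{\lambda,\mu}$ in Theorem \ref{thrm31}. First I would record the Taylor coefficients of the integral transform. Integrating $F_{\lambda,\mu}(t)/t$ term by term gives
$$\hat{F}_{\lambda,\mu}(z)=z+\sum_{n=2}^{\infty}\frac{\Gamma(\mu)}{\Gamma(\lambda(n-1)+\mu)}\,\frac{e^{-1/\mu}}{n!}\,z^{n},$$
so $\hat{F}_{\lambda,\mu}\in A$ with $|a_{n}|=\frac{\Gamma(\mu)}{\Gamma(\lambda(n-1)+\mu)}\frac{e^{-1/\mu}}{n!}$. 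The only change from $F_{\lambda,\mu}$ is the extra factor $1/n$, i.e.\ $1/n!$ in place of $1/(n-1)!$, and this is the feature that drives the whole computation. By Theorem \ref{thrm21} it then suffices to prove
$$\sum_{n=2}^{\infty}\bigl(1+(n-1)\gamma\bigr)\bigl(n-\alpha-(n-1)\alpha\beta\bigr)\frac{\Gamma(\mu)}{\Gamma(\lambda(n-1)+\mu)}\frac{e^{-1/\mu}}{n!}\le 1-\alpha .$$

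Next I would reuse the two elementary gamma estimates already in hand: under $\lambda\ge 1$ and $\mu>0.462$ one has $\Gamma(\mu)/\Gamma(\lambda(n-1)+\mu)\le 1/(\mu)_{n-1}\le \mu^{-(n-1)}$ by (\ref{eq35}) and (\ref{eq36}), which majorises the series by one whose terms involve only $\mu^{-(n-1)}/n!$. The crucial algebraic step is to rewrite the weight in a basis compatible with $1/n!$. Since $n(n-1)/n!=1/(n-2)!$ and $n/n!=1/(n-1)!$, the natural basis is $\{n(n-1),\,n,\,1\}$ rather than the $\{(n-1)(n-2),\,(n-1),\,1\}$ used for $F_{\lambda,\mu}$; a direct expansion yields
$$\bigl(1+(n-1)\gamma\bigr)\bigl(n-\alpha-(n-1)\alpha\beta\bigr)=\gamma(1-\alpha\beta)\,n(n-1)+\bigl[(1-\alpha\beta)-\gamma\alpha(1-\beta)\bigr]n-\alpha(1-\beta)(1-\gamma).$$

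I would then split the majorant into three pieces along this decomposition and evaluate each in closed form against the exponential series $\sum_{m\ge 0}\mu^{-m}/m!=e^{1/\mu}$. After the shift $m=n-2$ the $n(n-1)$-piece collapses to $\gamma(1-\alpha\beta)\mu^{-1}$; after $m=n-1$ the $n$-piece collapses to a multiple of $1-e^{-1/\mu}$; and the constant piece produces the contribution attached to $\alpha(1-\beta)(1-\gamma)$. Assembling the three closed forms and requiring the total to be at most $1-\alpha$ is the content of hypothesis (\ref{eq42}), whereupon Theorem \ref{thrm21} delivers $\hat{F}_{\lambda,\mu}\in S^{*}C(\alpha,\beta;\gamma)$.

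The main obstacle I anticipate is computational rather than conceptual, and it is concentrated in the constant term. Unlike the other two, the series $\sum_{n\ge 2}\mu^{-(n-1)}/n!$ does not telescope cleanly (it equals $\mu\bigl(e^{1/\mu}-1-\mu^{-1}\bigr)$), and its coefficient $-\alpha(1-\beta)(1-\gamma)$ enters with a sign that must be tracked with care; keeping the $e^{-1/\mu}$ prefactors straight so that the final bound lands \emph{exactly} on the stated form of (\ref{eq42}) is the delicate point, since any slip manifests as spurious $\mu$-terms. Apart from that bookkeeping the argument is a line-by-line adaptation of Theorem \ref{thrm31}, and the analogous necessary-and-sufficient statement for $\hat{G}_{\lambda,\mu}$ would follow by the same computation, its coefficients differing only in sign.
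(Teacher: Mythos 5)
Your proposal is correct and is essentially the paper's own proof: the same reduction to Theorem \ref{thrm21} via the coefficients $\frac{\Gamma(\mu)}{\Gamma(\lambda(n-1)+\mu)}\frac{e^{-1/\mu}}{n!}$, the same gamma estimates (\ref{eq35})--(\ref{eq36}), and the same algebraic decomposition (your $n$-coefficient $(1-\alpha\beta)-\gamma\alpha(1-\beta)$ equals the paper's $(1-\beta)(1-\gamma)\alpha+(1-\alpha)$). The one substantive divergence is your handling of the constant piece, and there your anticipated "bookkeeping" worry is a genuine discrepancy, not a potential slip: if you evaluate $\sum_{n\ge 2}\mu^{-(n-1)}/n!=\mu\left(e^{1/\mu}-1-\mu^{-1}\right)$ exactly, the assembled total is strictly \emph{smaller} than the quantity controlled by (\ref{eq42}) --- it equals the paper's bound minus $\alpha(1-\beta)(1-\gamma)\left[\mu\left(1-e^{-1/\mu}\right)-e^{-1/\mu}\right]$, a nonnegative correction since $e^{1/\mu}\ge 1+1/\mu$ --- so "requiring the total to be at most $1-\alpha$" is not literally the content of hypothesis (\ref{eq42}), and you will not land exactly on it. The paper avoids this by simply discarding the negative series $-\alpha(1-\beta)(1-\gamma)\sum_{n\ge 2}(\cdots)/n!$ (bounding it above by zero) before applying the gamma estimates, which yields precisely (\ref{eq42}); alternatively, your exact evaluation still proves the theorem, since (\ref{eq42}) implies your sharper bound is at most $1-\alpha$ --- indeed it shows the hypothesis could be slightly weakened. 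Two smaller points: your order of operations (majorize the full series first, then decompose) is the right one, because the one-sided estimate $\Gamma(\mu)/\Gamma(\lambda(n-1)+\mu)\le \mu^{-(n-1)}$ may only be applied termwise while every coefficient is nonnegative, which holds for the undecomposed weight but would reverse direction on the isolated negative piece; and your closing claim that the companion result for $\hat{G}_{\lambda,\mu}$ is "necessary and sufficient" overreaches --- Theorem \ref{thrm42} is only a sufficient condition, since the one-sided gamma estimates destroy necessity even though Theorem \ref{thrm22} itself is an equivalence.
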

\begin{proof}
Since 
$${{\hat{F}}_{\lambda ,\mu }}(z)=z+\sum\limits_{n=2}^{\infty }{\frac{\Gamma (\mu )}{\Gamma (\lambda (n-1)+\mu )}\frac{{{e}^{-1/\mu }}}{n!}{{z}^{n}},z\in U\text{ }}$$
according to Theorem \ref{thrm21}, the function ${{\hat{F}}_{\lambda ,\mu }}(z)$ belongs to the class ${{S}^{*}}C(\alpha ,\beta ;\gamma )$ if the following condition is satisfied   
\begin{equation}\label{eq43}
\sum\limits_{n=2}^{\infty }{\left( 1+(n-1)\gamma  \right)\left( n-\alpha -(n-1)\alpha \beta  \right)}\frac{\Gamma (\mu )}{\Gamma (\lambda (n-1)+\mu )}\frac{{{e}^{-1/\mu }}}{n!}\le 1-\alpha .
\end{equation}
Let 
$${{L}_{2}}(p;\alpha ,\beta ;\gamma )=\sum\limits_{n=2}^{\infty }{\left( 1+(n-1)\gamma  \right)\left( n-\alpha -(n-1)\alpha \beta  \right)}\frac{\Gamma (\mu )}{\Gamma (\lambda (n-1)+\mu )}\frac{{{e}^{-1/\mu }}}{n!}.$$
Setting 
\begin{align*}
  & \left( 1+(n-1)\gamma  \right)\left( n-\alpha -(n-1)\alpha \beta  \right) \\ 
 & \text{                }=(n-1)n\left( 1-\alpha \beta  \right)\gamma +n\left( (1-\alpha \beta )(1-\gamma )+(1-\alpha )\gamma  \right)-(1-\beta )(1-\gamma )\alpha   
\end{align*}
and by simple computation, we obtain
\begin{align*}
  & {{L}_{2}}(p;\alpha ,\beta ;\gamma )= \\ 
 & \sum\limits_{n=2}^{\infty }\left\{ (n-1)n\left( 1-\alpha \beta  \right)\gamma +n(1-\beta )(1-\gamma )\alpha\right. \\
 & \left. -(1-\beta )(1-\gamma )\alpha +n(1-\alpha ) \right\}\frac{\Gamma (\mu )}{\Gamma (\lambda (n-1)+\mu )}\frac{{{e}^{-1/\mu }}}{n!} \\ 
 & =\sum\limits_{n=2}^{\infty }{\frac{\left( 1-\alpha \beta  \right)\gamma }{(n-2)!}\frac{\Gamma (\mu ){{e}^{-1/\mu }}}{\Gamma (\lambda (n-1)+\mu )}}+\sum\limits_{n=2}^{\infty }{\frac{(1-\beta )(1-\gamma )\alpha }{(n-1)!}\frac{\Gamma (\mu ){{e}^{-1/\mu }}}{\Gamma (\lambda (n-1)+\mu )}-} \\ 
 & \sum\limits_{n=2}^{\infty }{\frac{(1-\beta )(1-\gamma )\alpha }{n!}\frac{\Gamma (\mu ){{e}^{-1/\mu }}}{\Gamma (\lambda (n-1)+\mu )}+\sum\limits_{n=2}^{\infty }{\frac{1-\alpha }{(n-1)!}\frac{\Gamma (\mu ){{e}^{-1/\mu }}}{\Gamma (\lambda (n-1)+\mu )}}}. 
\end{align*}
Thus,
\begin{align*}
  & {{L}_{2}}(p;\alpha ,\beta ;\gamma )\le \sum\limits_{n=2}^{\infty }{\frac{\left( 1-\alpha \beta  \right)\gamma }{(n-2)!}\frac{\Gamma (\mu ){{e}^{-1/\mu }}}{\Gamma (\lambda (n-1)+\mu )}}+\sum\limits_{n=2}^{\infty }{\frac{(1-\beta )(1-\gamma )\alpha }{(n-1)!}\frac{\Gamma (\mu ){{e}^{-1/\mu }}}{\Gamma (\lambda (n-1)+\mu )}}+ \\ 
 & \text{                         }\sum\limits_{n=2}^{\infty }{\frac{1-\alpha }{(n-1)!}\frac{\Gamma (\mu ){{e}^{-1/\mu }}}{\Gamma (\lambda (n-1)+\mu )}}. 
\end{align*}
From (\ref{eq35}) and (\ref{eq36}), w have
\begin{align*}
  & {{L}_{2}}(p;\alpha ,\beta ;\gamma )\le \sum\limits_{n=2}^{\infty }{\frac{\left( 1-\alpha \beta  \right)\gamma }{(n-2)!}\frac{{{e}^{-1/\mu }}}{{{\mu }^{n-1}}}}+\sum\limits_{n=2}^{\infty }{\frac{(1-\beta )(1-\gamma )\alpha }{(n-1)!}\frac{{{e}^{-1/\mu }}}{{{\mu }^{n-1}}}}+\sum\limits_{n=2}^{\infty }{\frac{1-\alpha }{(n-1)!}\frac{{{e}^{-1/\mu }}}{{{\mu }^{n-1}}}}= \\ 
 & \frac{\left( 1-\alpha \beta  \right)\gamma }{\mu }+(1-\beta )(1-\gamma )\alpha \left( 1-{{e}^{-1/\mu }} \right)+(1-\alpha )\left( 1-{{e}^{-1/\mu }} \right).
\end{align*}
Therefore, inequality (\ref{eq43}) holds true if
$$\frac{\left( 1-\alpha \beta  \right)\gamma }{\mu }+(1-\beta )(1-\gamma )\alpha \left( 1-{{e}^{-1/\mu }} \right)+(1-\alpha )\left( 1-{{e}^{-1/\mu }} \right)\le 1-\alpha ,$$
which is equivalent to (\ref{eq42}).\\
Thus, the proof of Theorem \ref{thrm41} is completed. 
\end{proof}
Taking $\gamma =0$ and $\gamma =1$ in Theorem \ref{thrm41}, we arrive at the following results.
\begin{cor}\label{snc41}
Let $\lambda \ge 1,\mu >0.462$ and the following condition is satisfied
$$\left( 1-\beta  \right)\alpha \left( {{e}^{1/\mu }}-1 \right)\le 1-\alpha  .$$
Then, the function ${{\hat{F}}_{\lambda ,\mu }}(z)$ defined by (\ref{eq41}) is in the class ${{S}^{*}}(\alpha ,\beta ),\alpha ,\beta \in \left[ 0,1 \right)$.
\end{cor}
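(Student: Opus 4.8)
The plan is to derive this corollary directly from Theorem \ref{thrm41} by setting $\gamma = 0$; no independent argument is required.

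First I would recall, from the list of special cases recorded just after Definition \ref{tnm11}, that $S^*C(\alpha,\beta;0) = S^*(\alpha,\beta)$. Consequently, once we know that $\hat{F}_{\lambda,\mu}$ belongs to $S^*C(\alpha,\beta;0)$, membership in $S^*(\alpha,\beta)$ is immediate, and the conclusion of the corollary is exactly the $\gamma = 0$ instance of the conclusion of Theorem \ref{thrm41}.

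Next I would verify that the hypothesis of the corollary is the $\gamma = 0$ specialization of the hypothesis (\ref{eq42}). Substituting $\gamma = 0$ into (\ref{eq42}), the first summand $(1-\alpha\beta)\gamma\mu^{-1}$ vanishes on account of its factor $\gamma$, while the factor $(1-\gamma)$ in the second summand becomes $1$; thus (\ref{eq42}) reduces to
$$(1-\beta)\alpha\left(1 - e^{-1/\mu}\right)e^{1/\mu} \le 1-\alpha.$$
Finally, using the identity $\left(1 - e^{-1/\mu}\right)e^{1/\mu} = e^{1/\mu} - 1$, this is precisely the stated condition $(1-\beta)\alpha\left(e^{1/\mu} - 1\right) \le 1-\alpha$. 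Invoking Theorem \ref{thrm41} with $\gamma = 0$ under this hypothesis then yields the assertion.

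Since the entire content is inherited from Theorem \ref{thrm41}, there is no genuine obstacle here; the only points requiring attention are the bookkeeping of which terms survive at $\gamma = 0$ and the elementary simplification of the exponential factor.
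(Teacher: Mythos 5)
Your proposal is correct and matches the paper's own derivation: the paper obtains this corollary precisely by setting $\gamma=0$ in Theorem \ref{thrm41}, using $S^{*}C(\alpha,\beta;0)=S^{*}(\alpha,\beta)$, exactly as you do. Your explicit check that $(1-\beta)\alpha\left(1-e^{-1/\mu}\right)e^{1/\mu}=(1-\beta)\alpha\left(e^{1/\mu}-1\right)$ supplies the one simplification the paper leaves implicit, so nothing is missing.
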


\begin{cor}\label{snc42}
Let $\lambda \ge 1,\mu >0.462$ and the following condition is satisfied
$$\left( 1-\alpha \beta  \right){{\mu }^{-1}}{{e}^{1/\mu }}\le 1-\alpha  .$$
Then, the function ${{\hat{F}}_{\lambda ,\mu }}(z)$ defined by (\ref{eq41}) is in the class $C(\alpha ,\beta ),\alpha ,\beta \in \left[ 0,1 \right)$.
\end{cor}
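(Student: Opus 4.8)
The plan is to obtain this Corollary as the special case $\gamma = 1$ of Theorem \ref{thrm41}, exactly as the sentence preceding the statement announces. First I would recall from the list of special cases recorded just after Definition \ref{tnm11} that ${{S}^{*}}C(\alpha ,\beta ;1)=C(\alpha ,\beta )$, so that membership of $\hat{F}_{\lambda ,\mu }$ in the class $C(\alpha ,\beta )$ is the same as membership in ${{S}^{*}}C(\alpha ,\beta ;1)$. It therefore suffices to check that the hypothesis of Theorem \ref{thrm41} is met when the parameter $\gamma$ is set equal to $1$.

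The key step is the bookkeeping of this substitution in condition (\ref{eq42}). Putting $\gamma = 1$ forces $1-\gamma = 0$, so the second summand $(1-\beta )(1-\gamma )\alpha (1-{{e}^{-1/\mu }})$ inside the braces vanishes identically, while the first summand $(1-\alpha \beta )\gamma {{\mu }^{-1}}$ collapses to $(1-\alpha \beta ){{\mu }^{-1}}$. Hence the left-hand side of (\ref{eq42}) reduces to $(1-\alpha \beta ){{\mu }^{-1}}{{e}^{1/\mu }}$, and condition (\ref{eq42}) becomes precisely the inequality $(1-\alpha \beta ){{\mu }^{-1}}{{e}^{1/\mu }}\le 1-\alpha$ assumed in the Corollary.

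Applying Theorem \ref{thrm41} with $\gamma = 1$ then places $\hat{F}_{\lambda ,\mu }$ in ${{S}^{*}}C(\alpha ,\beta ;1)=C(\alpha ,\beta )$ under the hypotheses $\lambda \ge 1$ and $\mu >0.462$, which is exactly the assertion. I expect no genuine analytic obstacle here: the substantive work — the gamma-ratio bound (\ref{eq35}), the Pochhammer estimate (\ref{eq36}), and the termwise rearrangement of the defining series — has already been discharged in the proof of Theorem \ref{thrm41}, so the only thing demanding care is the algebraic simplification of the braced expression, which I would present explicitly to make the reduction transparent.
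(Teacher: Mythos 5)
Your proposal is correct and is exactly the paper's route: the paper derives Corollary \ref{snc42} by setting $\gamma =1$ in Theorem \ref{thrm41}, whereupon the term $(1-\beta )(1-\gamma )\alpha \left( 1-e^{-1/\mu } \right)$ in (\ref{eq42}) vanishes and the condition reduces to $(1-\alpha \beta )\mu ^{-1}e^{1/\mu }\le 1-\alpha$, with ${S}^{*}C(\alpha ,\beta ;1)=C(\alpha ,\beta )$ supplying the class identification. Your substitution bookkeeping matches what the paper leaves implicit, so there is nothing to correct.
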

\begin{rem}\label{yrm41}
Further consequences of the results given by Corollary \ref{snc41} and Corollary \ref{snc42} can be obtained for each of the classes, by specializing the various parameters involved.
\end{rem}
\begin{thm}\label{thrm42}
Let $\lambda \ge 1,\mu >0.462$, then the function ${{\hat{G}}_{\lambda ,\mu }}(z)$ defined by (\ref{eq41}) belongs to the class $T{{S}^{*}}C(\alpha ,\beta ;\gamma )$, $\alpha ,\beta \in \left[ 0,1 \right),\gamma \in \left[ 0,1 \right]$ if  
$$\left\{ \left( 1-\alpha \beta  \right)\gamma {{\mu }^{-1}}+\left( 1-\beta  \right)\left( 1-\gamma  \right)\alpha \left( 1-{{e}^{-1/\mu }} \right) \right\}{{e}^{1/\mu }}\le 1-\alpha .$$
\end{thm}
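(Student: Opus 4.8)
The plan is to reduce membership of $\hat{G}_{\lambda,\mu}$ to the coefficient criterion of Theorem \ref{thrm22} and then recycle the estimate already carried out for the integral operator $\hat{F}_{\lambda,\mu}$ in Theorem \ref{thrm41}. First I would compute the Taylor expansion of $\hat{G}_{\lambda,\mu}$. Starting from (\ref{eq32}) we have $G_{\lambda,\mu}(t)/t = 1 - \sum_{n=2}^{\infty} \frac{\Gamma(\mu)}{\Gamma(\lambda(n-1)+\mu)}\frac{e^{-1/\mu}}{(n-1)!}\,t^{n-1}$, and this series converges uniformly on compact subsets of $U$, so term-by-term integration is legitimate. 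Integrating divides the $n$-th coefficient by $n$ and preserves the sign pattern, giving
$$\hat{G}_{\lambda,\mu}(z)=z-\sum_{n=2}^{\infty}\frac{\Gamma(\mu)}{\Gamma(\lambda(n-1)+\mu)}\frac{e^{-1/\mu}}{n!}\,z^{n},\quad z\in U.$$
In particular $\hat{G}_{\lambda,\mu}\in T$, with coefficients $a_n=\frac{\Gamma(\mu)}{\Gamma(\lambda(n-1)+\mu)}\frac{e^{-1/\mu}}{n!}\ge 0$, so the necessary and sufficient criterion of Theorem \ref{thrm22} is available.

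By Theorem \ref{thrm22}, $\hat{G}_{\lambda,\mu}$ lies in $TS^{*}C(\alpha,\beta;\gamma)$ if and only if $\sum_{n=2}^{\infty}\left(1+(n-1)\gamma\right)\left(n-\alpha-(n-1)\alpha\beta\right)\frac{\Gamma(\mu)}{\Gamma(\lambda(n-1)+\mu)}\frac{e^{-1/\mu}}{n!}\le 1-\alpha$. The decisive observation is that the left-hand side is exactly the quantity $L_{2}(p;\alpha,\beta;\gamma)$ estimated in the proof of Theorem \ref{thrm41}, because the coefficients of $\hat{G}_{\lambda,\mu}$ agree in absolute value with those of $\hat{F}_{\lambda,\mu}$. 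Hence I would reuse verbatim the decomposition
$$\left(1+(n-1)\gamma\right)\left(n-\alpha-(n-1)\alpha\beta\right)=(n-1)n\left(1-\alpha\beta\right)\gamma+n\left((1-\alpha\beta)(1-\gamma)+(1-\alpha)\gamma\right)-(1-\beta)(1-\gamma)\alpha,$$
together with the inequalities (\ref{eq35}) and (\ref{eq36}), to collapse the three resulting sums into the exponential series $\sum 1/((n-2)!\,\mu^{n-1})$, $\sum 1/((n-1)!\,\mu^{n-1})$ and their shifts. This yields the same upper bound $\frac{(1-\alpha\beta)\gamma}{\mu}+(1-\beta)(1-\gamma)\alpha\left(1-e^{-1/\mu}\right)+(1-\alpha)\left(1-e^{-1/\mu}\right)$, which is $\le 1-\alpha$ precisely under hypothesis (\ref{eq42}), i.e. the displayed condition of the theorem.

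I expect there to be no genuine obstacle here: the entire analytic content already resides in Theorem \ref{thrm41}, and the only new points are the term-by-term integration producing the $1/n!$ factor and the remark that $\hat{G}_{\lambda,\mu}\in T$ lets one invoke the sharp characterization of Theorem \ref{thrm22}. Consequently the proof can be stated very briefly, exactly as the authors did for Theorem \ref{thrm32}, by noting that it runs identically to the proof of Theorem \ref{thrm41} with the criterion of Theorem \ref{thrm21} replaced by that of Theorem \ref{thrm22}; the details may therefore be omitted.
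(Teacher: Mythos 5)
Your proposal is correct and takes exactly the paper's route: the paper's proof of Theorem \ref{thrm42} consists solely of the remark that it is the same as the proof of Theorem \ref{thrm41}, and what you write---term-by-term integration giving $\hat{G}_{\lambda,\mu}(z)=z-\sum_{n=2}^{\infty}\frac{\Gamma(\mu)}{\Gamma(\lambda(n-1)+\mu)}\frac{e^{-1/\mu}}{n!}z^{n}\in T$, followed by the coefficient criterion of Theorem \ref{thrm22} applied to the same quantity $L_{2}$ already bounded via (\ref{eq35}) and (\ref{eq36})---is precisely that omitted argument written out in full. The details you supply (the $1/n!$ coefficient, membership in $T$, and the equivalence of the resulting bound with condition (\ref{eq42})) all check out.
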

\begin{proof}
The proof of Theorem \ref{thrm42} is same of the proof of Theorem \ref{thrm41}. Therefore, the details of the proof of Theorem \ref{thrm42} may be omitted.
\end{proof}
Taking $\gamma =0$ and $\gamma =1$ in Theorem \ref{thrm42}, we can readily deduce the following results, respectively.

\begin{cor}\label{snc43}
Let $\lambda \ge 1,\mu >0.462$, then the function ${{\hat{G}}_{\lambda ,\mu }}(z)$ defined by (\ref{eq41}) belongs to the class $T{{S}^{*}}(\alpha ,\beta )$, $\alpha ,\beta \in \left[ 0,1 \right)$ if 
$$\left( 1-\beta  \right)\alpha \left( {{e}^{1/\mu }}-1 \right)\le 1-\alpha .$$
\end{cor}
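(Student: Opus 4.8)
The plan is to obtain the statement as the $\gamma=0$ specialization of Theorem \ref{thrm42}, using the identification $TS^{*}(\alpha,\beta)=TS^{*}C(\alpha,\beta;0)$ recorded after Definition \ref{tnm11}. First I would substitute $\gamma=0$ into the hypothesis $\{(1-\alpha\beta)\gamma\mu^{-1}+(1-\beta)(1-\gamma)\alpha(1-e^{-1/\mu})\}e^{1/\mu}\le 1-\alpha$ of Theorem \ref{thrm42}. The first term in the brace vanishes and the factor $1-\gamma$ becomes $1$, leaving $(1-\beta)\alpha(1-e^{-1/\mu})e^{1/\mu}\le 1-\alpha$. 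Applying the elementary identity $(1-e^{-1/\mu})e^{1/\mu}=e^{1/\mu}-1$ converts this into $(1-\beta)\alpha(e^{1/\mu}-1)\le 1-\alpha$, which is exactly the stated condition; since Theorem \ref{thrm42} then guarantees $\hat G_{\lambda,\mu}\in TS^{*}C(\alpha,\beta;0)=TS^{*}(\alpha,\beta)$, the corollary follows at once.

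For a self-contained argument I would instead invoke the coefficient criterion for $TS^{*}(\alpha,\beta)$, namely Corollary \ref{snc25} (the $\gamma=0$ case of Theorem \ref{thrm22}): a function $z-\sum_{n\ge2}a_nz^n$ lies in $TS^{*}(\alpha,\beta)$ if and only if $\sum_{n\ge2}(n-\alpha-(n-1)\alpha\beta)a_n\le 1-\alpha$. Integrating $G_{\lambda,\mu}(t)/t$ termwise gives $\hat G_{\lambda,\mu}(z)=z-\sum_{n\ge2}\frac{\Gamma(\mu)}{\Gamma(\lambda(n-1)+\mu)}\frac{e^{-1/\mu}}{n!}z^n$, so I must bound the sum $\sum_{n\ge2}(n-\alpha-(n-1)\alpha\beta)\frac{\Gamma(\mu)}{\Gamma(\lambda(n-1)+\mu)}\frac{e^{-1/\mu}}{n!}$ by $1-\alpha$. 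Here I would use the splitting $n-\alpha-(n-1)\alpha\beta=n(1-\alpha)+(n-1)(1-\beta)\alpha$, which combined with $n/n!=1/(n-1)!$ and $(n-1)/n!\le 1/(n-1)!$ separates the estimate into two factorial-weighted pieces with positive coefficients.

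After applying the bounds $\Gamma(\mu)/\Gamma(\lambda(n-1)+\mu)\le(\mu)_{n-1}^{-1}\le\mu^{-(n-1)}$ from (\ref{eq35})--(\ref{eq36}) to these positive terms, the reindexed series $\sum_{n\ge2}\frac{e^{-1/\mu}}{\mu^{n-1}(n-1)!}=1-e^{-1/\mu}$ collapses the estimate to $(1-\beta)\alpha(1-e^{-1/\mu})+(1-\alpha)(1-e^{-1/\mu})$, and requiring this to be $\le 1-\alpha$ reduces, after multiplying through by $e^{1/\mu}$, to the stated inequality $(1-\beta)\alpha(e^{1/\mu}-1)\le 1-\alpha$. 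The only care point in either route is the factorial bookkeeping and confirming that discarding the $-1/n!$ contribution (equivalently, the bound $(n-1)/n!\le 1/(n-1)!$) only strengthens the inequality; there is no genuine obstacle, since the result is a direct specialization of the already-proved Theorem \ref{thrm42}.
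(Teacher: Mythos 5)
Your proposal is correct and takes essentially the same route as the paper: the paper obtains this corollary precisely by setting $\gamma =0$ in Theorem \ref{thrm42}, using $T{{S}^{*}}C(\alpha ,\beta ;0)=T{{S}^{*}}(\alpha ,\beta )$ and the simplification $\left( 1-\beta \right)\alpha \left( 1-{{e}^{-1/\mu }} \right){{e}^{1/\mu }}=\left( 1-\beta \right)\alpha \left( {{e}^{1/\mu }}-1 \right)$, exactly as in your first paragraph. Your self-contained backup argument is also sound but not genuinely different either: it reproduces the $\gamma =0$ case of the proof of Theorem \ref{thrm41} (which the paper says carries over verbatim to Theorem \ref{thrm42}), your splitting $n-\alpha -(n-1)\alpha \beta =n(1-\alpha )+(n-1)(1-\beta )\alpha$ being the paper's decomposition at $\gamma =0$ and your bound $(n-1)/n!\le 1/(n-1)!$ being exactly the paper's step of discarding the negative term $-(1-\beta )(1-\gamma )\alpha /n!$.
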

\begin{cor}\label{snc44}
Let $\lambda \ge 1,\mu >0.462$, then the function ${{\hat{G}}_{\lambda ,\mu }}(z)$ defined by (4.1) belongs to the class $TC(\alpha ,\beta )$, $\alpha ,\beta \in \left[ 0,1 \right)$ if 
$$\left( 1-\alpha \beta  \right){{\mu }^{-1}}{{e}^{1/\mu }}\le 1-\alpha .$$
\end{cor}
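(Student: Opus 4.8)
The plan is to obtain this corollary as the $\gamma=1$ specialization of Theorem~\ref{thrm42}. First I would recall the identification $T{{S}^{*}}C(\alpha ,\beta ;1)=TC(\alpha ,\beta )$ recorded just after Definition~\ref{tnm11}, so that the assertion $\hat{G}_{\lambda ,\mu }\in TC(\alpha ,\beta )$ is literally the assertion $\hat{G}_{\lambda ,\mu }\in T{{S}^{*}}C(\alpha ,\beta ;1)$. It then suffices to set $\gamma=1$ in the hypothesis of Theorem~\ref{thrm42} and verify that it collapses to the inequality stated here.

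The substitution is immediate. In the condition
$$\left\{ \left( 1-\alpha \beta  \right)\gamma {{\mu }^{-1}}+\left( 1-\beta  \right)\left( 1-\gamma  \right)\alpha \left( 1-{{e}^{-1/\mu }} \right) \right\}{{e}^{1/\mu }}\le 1-\alpha$$
the factor $(1-\gamma)$ annihilates the second summand, while $(1-\alpha\beta)\gamma\mu^{-1}$ becomes $(1-\alpha\beta)\mu^{-1}$, leaving exactly $\left( 1-\alpha \beta  \right){{\mu }^{-1}}{{e}^{1/\mu }}\le 1-\alpha$. Since this is precisely the hypothesis of the corollary, the conclusion follows directly from Theorem~\ref{thrm42}.

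Should one prefer a self-contained argument, I would instead mimic the proof of Theorem~\ref{thrm41}, using the $TC(\alpha,\beta)$ characterization of Corollary~\ref{snc26} in place of Theorem~\ref{thrm21}. Because $\hat{G}_{\lambda ,\mu }(z)=z-\sum_{n=2}^{\infty}\frac{\Gamma(\mu)}{\Gamma(\lambda(n-1)+\mu)}\frac{e^{-1/\mu}}{n!}z^{n}$, it is enough to bound $\sum_{n=2}^{\infty} n\left(n-\alpha-(n-1)\alpha\beta\right)\frac{\Gamma(\mu)}{\Gamma(\lambda(n-1)+\mu)}\frac{e^{-1/\mu}}{n!}$ by $1-\alpha$. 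Using the decomposition $n\left(n-\alpha-(n-1)\alpha\beta\right)=n(n-1)(1-\alpha\beta)+n(1-\alpha)$ together with the estimates (\ref{eq35}) and (\ref{eq36}), and reindexing the two resulting exponential series, the sum reduces to $\frac{1-\alpha\beta}{\mu}+(1-\alpha)\left(1-e^{-1/\mu}\right)$; requiring this to be at most $1-\alpha$ rearranges at once to the stated condition. The only point needing any care is the resummation of the exponential series after the shift of index, but this is entirely routine, so I expect no genuine obstacle: the result is a clean $\gamma=1$ specialization of the preceding theorem.
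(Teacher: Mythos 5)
Your proposal is correct and follows exactly the paper's route: the paper obtains Corollary~\ref{snc44} precisely by setting $\gamma=1$ in Theorem~\ref{thrm42} (using $T{{S}^{*}}C(\alpha,\beta;1)=TC(\alpha,\beta)$), and your substitution correctly reduces the hypothesis to $\left(1-\alpha\beta\right)\mu^{-1}e^{1/\mu}\le 1-\alpha$. Your optional self-contained argument via Corollary~\ref{snc26}, with the decomposition $n\left(n-\alpha-(n-1)\alpha\beta\right)=n(n-1)(1-\alpha\beta)+n(1-\alpha)$ and the estimates (\ref{eq35})--(\ref{eq36}), is also sound but unnecessary, since the specialization already settles the claim.
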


\begin{rem}\label{yrm42}
Further consequences of the results given by Corollary \ref{snc43} and Corollary \ref{snc44} can be obtained for each of the classes, by specializing the various parameters involved. 
\end{rem}

\section{Conclusions and Discussions}
In this paper, we defined two general subclasses of the analytic functions in the open unit disk in the complex plane. We obtained coefficient estimates for these functions in these classes. From these results, we can easily obtain results found in the literature (See \cite{Alt?ntas91,Alt?ntas88,Silverman}). \\ 
Moreover, in this paper, analytic functions involving the Gamma function and their integral operators were investigated. The sufficient and also necessary and sufficient conditions for these functions to be in the classes ${{S}^{*}}C(\alpha ,\beta ;\gamma )$ and $T{{S}^{*}}C(\alpha ,\beta ;\gamma )$ are given.

\begin{note}
The short abstract of this manuscript was previously presented and published in \textit{ICANAS-2017, Antalya, TURKEY} (See \cite{Mustafa}).
\end{note}

\end{document}